\newtheorem{theorem}{Theorem}[section]
\newtheorem{lemma}[theorem]{Lemma}
\newtheorem{proposition}[theorem]{Proposition}
\newtheorem{definition}[theorem]{Definition}
\newtheorem{remark}[theorem]{Remark}
\numberwithin{equation}{section}
\title{On stability and instability of the ground states for the focusing inhomogeneous NLS with inverse-square potential}
\author{{\bf JinMyong An, HakBom Mun and JinMyong Kim$^*$}\\
\footnotesize{Faculty of Mathematics, {\bf Kim Il Sung} University, Pyongyang, Democratic People's Republic of Korea}\\
\footnotesize{$^*$ Corresponding Author: jm.kim0211@ryongnamsan.edu.kp.}
}
\date{}
\begin{document}
\maketitle
\begin{abstract}
In this paper, we study the stability and instability of the ground states for the focusing inhomogeneous nonlinear Schr\"{o}dinger equation with inverse-square potential (for short, INLS$_c$ equation):
\[iu_{t} +\Delta u+c|x|^{-2}u+|x|^{-b} |u|^{\sigma } u=0,\;
u(0)=u_{0}(x) \in H^{1},\;(t,x)\in \mathbb R\times\mathbb R^{d},\]
where $d\ge3$, $0<b<2$, $0<\sigma<\frac{4-2b}{d-2}$ and $c\neq 0$ be such that $c<c(d):=\left(\frac{d-2}{2}\right)^{2}$.
In the mass-subcritical case $0<\sigma<\frac{4-2b}{d}$, we prove the stability of the set of ground states for the INLS$_{c}$ equation.
In the mass-critical case $\sigma=\frac{4-2b}{d}$, we first prove that the solution of the INLS$_c$ equation with initial data $u_{0}$ satisfying $E(u_0)<0$ blows up in finite or infinite time. Using this fact, we then prove that the ground state standing waves are unstable by blow-up.
In the intercritical case $\frac{4-2b}{d}<\sigma<\frac{4-2b}{d-2}$, we finally show the instability of ground state standing waves for the INLS$_c$ equation.
\end{abstract}

\textit{2020 Mathematics Subject Classification.} 35Q55, 35B35.

\textit{Key words and phrases.} Inhomogeneous nonlinear Schr\"{o}dinger equation, Inverse-square potential, Ground state, Standing wave, Stability, Instability.
\section{Introduction}\label{sec 1.}

In this paper, we consider the Cauchy problem for the focusing inhomogeneous nonlinear
Schr\"{o}dinger equation with inverse-square potential, denoted by INLS$_{c}$ equation,
\begin{equation} \label{GrindEQ__1_1_}
\left\{\begin{array}{l} {iu_{t}+\Delta u+c|x|^{-2}u+|x|^{-b}|u|^{\sigma } u=0,\;(t,x)\in\mathbb R\times\mathbb R^{d},}
\\ {u\left(0,\; x\right)=u_{0}(x)\in H^{1}(\mathbb R^{d}),} \end{array}\right.
\end{equation}
where $d\ge3$, $u:\mathbb R\times \mathbb R^{d} \to \mathbb C$, $u_{0}:\mathbb R^{d} \to \mathbb C$, $0<b<2$, $0<\sigma<\frac{4-2b}{d-2}$ and $c\neq 0$ satisfies $c<c(d):=\left(\frac{d-2}{2}\right)^{2}$.

The equation \eqref{GrindEQ__1_1_} appears in a variety of physical settings, for example, in nonlinear optical systems with spatially dependent interactions (see e.g. \cite{BPVT07,KMVBT17} and the references therein).
The case $b=c=0$ is the classic nonlinear Schr\"{o}dinger (NLS) equation which has been been widely studied over the last three decades (see e.g. \cite{C03, LP15, WHHG11} and the references therein). The case $b=0$ and $c\neq0$ is known as the NLS equation with inverse-square potential, denoted by NLS$_{c}$ equation, has also attracted a lot of interest in recent years (see e.g. \cite{BDZ18,D18,D21,KMVZZ17,KMVZ17,MM18, Y21} and the references therein). Moreover, when $c=0$ and $b\neq0$, we have the inhomogeneous nonlinear Schr\"{o}dinger equation, denoted by INLS equation, which has been extensively studied in the past several years (see e.g. \cite{AT21,AK211,AK212,AK23,AKC22,AC21,BF15,C21,DK21,G12} and the references therein).
When $b\neq 0$ and $c\neq 0$, we have the INLS$_{c}$ equation, which has also been studied by several authors in recent years (see \cite{AJK23,CG21,JAK21,S16} for example).

In this paper, we focus on the INLS$_{c}$ equation \eqref{GrindEQ__1_1_} with $b>0$ and $c\neq 0$.
The INLS$_c$ equation \eqref{GrindEQ__1_1_} is invariant under the scaling,
$$
u_{\lambda}(t,x):=\lambda^{\frac{2-b}{\sigma}}u\left(\lambda^{2}t,\lambda x\right),~\lambda>0.
$$
An easy computation shows that
$$
\left\|u_{\lambda}(0)\right\|_{\dot{H}^{s}}=\lambda^{s-\frac{d}{2}+\frac{2-b}{\sigma}}\left\|u_0\right\|_{\dot{H}^{s}},
$$
which implies that the critical Sobolev index is given by
\begin{equation}\nonumber
s_{c}:=\frac{d}{2}-\frac{2-b}{\sigma}.
\end{equation}
If $s_{c}=0$ (alternatively $\sigma=\frac{4-2b}{d}$) the problem is known as the mass-critical or $L^{2}$-critical. If $s_{c}=1$ (alternatively $\sigma=\frac{4-2b}{d-2}$) it is called energy-critical or $H^1$-critical. The problem is known as intercritical (mass-supercritical and energy-subcritical) if $0<s_{c}<1$ (alternatively $\frac{4-2b}{d}<\sigma<\frac{4-2b}{d-2}$).
On the other hand, solutions to the INLS$_{c}$ equation \eqref{GrindEQ__1_1_} conserve the mass and energy, defined respectively by
\begin{equation}\label{GrindEQ__1_2_}
M\left(u(t)\right):=\int_{\mathbb R^{d}}{\left|u(t, x)\right|^{2}dx},
\end{equation}
\begin{equation}\label{GrindEQ__1_3_}
E\left(u(t)\right):=\frac{1}{2}\left\|u\right\|_{\dot{H}_{c}^{1}}^{2}-\frac{1}{\sigma+2}\int_{\mathbb R^{d}}{|x|^{-b} \left|u(t, x)\right|^{\sigma +2} dx},
\end{equation}
where
\begin{equation}\nonumber
\left\|u\right\|_{\dot{H}_{c}^{1}}^{2}:=\left\|\nabla u \right\|_{L^2}^{2}-c\left\||x|^{-1}u\right\|_{L^2}^{2}.
\end{equation}
is the Hardy functional. By sharp Hardy inequality
\begin{equation}\label{GrindEQ__1_4_}
c(d)\left\||x|^{-1}u\right\|_{L^2}^2\le \left\|\nabla u\right\|_{L^2}^{2},~\forall u\in H^1(\mathbb R^d),
\end{equation}
we can see that
$$
\left\|u\right\|_{\dot{H}_{c}^1}\sim\left\|u\right\|_{\dot{H}^1},~\textrm{for}~c<c(d).
$$

Let us recall the known results for INLS$_{c}$ equation \eqref{GrindEQ__1_1_} with $b>0$ and $c\neq 0$.
Using the energy method, Suzuki \cite{S16} showed that if
\footnote[1]{\ Note that the author in \cite{S16} considered \eqref{GrindEQ__1_1_} with $c=c(d)$. The authors in \cite{CG21} pointed out that the proof for the case $c>-c(d)$ is an immediate consequence of the previous one.}
$d\ge3$, $0<\sigma<\frac{4-2b}{d-2}$, $c<c(d)$ and $0<b<2$, then the INLS$_{c}$ equation \eqref{GrindEQ__1_1_} is locally well-posed in $H^{1}$. It was also proved that any local solution of the INLS$_c$ equation \eqref{GrindEQ__1_1_} extends globally in time if $0<\sigma<\frac{4-2b}{d}$.
Later, the authors in \cite{AJK23,CG21} studied the global existence and blow-up of $H^{1}$-solutions to the focusing INLS$_{c}$ equation \eqref{GrindEQ__1_1_} with $d\ge 3$ and $\frac{4-2b}{d}\le\sigma<\frac{4-2b}{d-2}$.
The authors in \cite{JAK21} also established the local well-posedness as well as small data global well-posedness and scattering in $H^{1}$ for the energy-critical INLS$_{c}$ equation \eqref{GrindEQ__1_1_} with $d\ge3$, $c<\frac{(d+2-2b)^{2}-4}{(d+2-2b)^{2}}c(d)$, $0<b<\frac{4}{d}$ and $\sigma=\frac{4-2b}{d-2}$.

On the other hand, the stability and instability of the ground states for \eqref{GrindEQ__1_1_} have also been studied by several authors.
The stability and instability of the ground state standing waves for the classical nonlinear Schr\"{o}dinger (i.e. \eqref{GrindEQ__1_1_} with $b=0$ and $c=0$) were widely studied by physicists and mathematicians (see. e.g. \cite{F15}). The case $b=0$ and $c\neq0$ was studied by \cite{BDZ18,D21} and the case $0<b<2$ and $c=0$ was studied by \cite{AC21,BF15,G12}. However, up to the knowledge of the authors, there are no any results about the stability and instability of the ground states for the INLS$_{c}$ equation \eqref{GrindEQ__1_1_} with $0<b<2$ and $c\neq0$.

The main purpose of this paper is to study the stability and instability of the ground states for the INLS$_c$ equation \eqref{GrindEQ__1_1_} with $0<b<2$ and $c\neq0$.

Throughout the paper, we call a standing wave a solution of \eqref{GrindEQ__1_1_} of the form $e^{i\omega t}\phi_{\omega}$, where $\omega\in \mathbb R$ is a frequency and $\phi_{\omega}\in H^1$ is a nontrivial solution to the elliptic equation
\begin{equation}\label{GrindEQ__1_5_}
-\Delta \phi_{\omega}+\omega \phi_{\omega}-c|x|^{-2}\phi_{\omega}-|x|^{-b}|\phi_{\omega}|^{\sigma}\phi_{\omega}=0.
\end{equation}

Note that \eqref{GrindEQ__1_5_} can be written as $S_{\omega}'(u)=0$, where $S_{\omega}(u)$ is the action functional defined by
\begin{equation}\label{GrindEQ__1_6_}
S_{\omega}(u):=E(u)+\frac{\omega}{2}\left\|u\right\|_{L^2}^{2}.
\end{equation}

We denote the set of non-trivial solutions of \eqref{GrindEQ__1_5_} by
\begin{equation}\label{GrindEQ__1_7_}
\mathcal{A}_{\omega}:=\{u\in H^{1}\setminus \{0\}:~S_{\omega}'(u)=0\}.
\end{equation}

\begin{definition}[Ground states]\label{defn 1.1.}
A function $\phi\in \mathcal{A}$ is called a ground state for \eqref{GrindEQ__1_5_} if it is a minimizer of $S_{\omega}$ over the set $\mathcal{A}_{\omega}$. The set of ground states is denoted by $\mathcal{G}_{\omega}$. In particular,
\begin{equation}\label{GrindEQ__1_8_}
\mathcal{G}_{\omega}=\{u\in \mathcal{A}_{\omega},~S_{\omega}(u)\le S_{\omega}(v),~\forall v\in \mathcal{A}_{\omega}\}.
\end{equation}
\end{definition}

We have the following result on the existence of ground states for \eqref{GrindEQ__1_7_}, which is proved in Section \ref{sec 2.}.
\begin{proposition}\label{prp 1.2.}
Let $d\ge 3$, $0<b<2$, $0<\sigma<\frac{4-2b}{d-2}$, $\omega>0$ and $c\neq 0$ be such that $c<c(d)$. The the set of ground states $\mathcal{G}_{\omega}$ is not empty and it is characterized by
\begin{equation}\label{GrindEQ__1_9_}
\mathcal{G}_{\omega}=\{u\in H^1:~S_{\omega}(u)=s(\omega),~K_{\omega}(u)=0\},
\end{equation}
where
\begin{equation}\label{GrindEQ__1_10_}
K_{\omega}(u):=\partial_{\lambda}S_{\omega}(\lambda u)|_{\lambda=1}=\left\|u\right\|_{\dot{H}_{c}^{1}}^{2}+\omega \left\|u\right\|_{L^2}^{2}-\left\||x|^{\frac{-b}{\sigma+2}}u\right\|_{L^{\sigma+2}}^{\sigma+2}
\end{equation}
is the Nehari functional and
\begin{equation}\label{GrindEQ__1_11_}
s(\omega):=\inf\{S_{\omega}(u):~u\in H^{1}\setminus\{0\},~K_{\omega}(u)=0\}.
\end{equation}
\end{proposition}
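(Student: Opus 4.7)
The plan is to solve the constrained minimization \eqref{GrindEQ__1_11_} directly and then identify its minimizers as ground states via a Lagrange-multiplier argument.

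\emph{Reduction to the Nehari constraint.} Since $K_\omega(u)=\langle S_\omega'(u),u\rangle$, every element of $\mathcal{A}_\omega$ satisfies $K_\omega=0$, which already yields $s(\omega)\le\inf_{\mathcal{A}_\omega}S_\omega$. Using $K_\omega(u)=0$ to eliminate the nonlinear term gives the key identity
\[
S_\omega(u)=\tfrac{\sigma}{2(\sigma+2)}\bigl(\|u\|_{\dot H_c^1}^2+\omega\|u\|_{L^2}^2\bigr)\qquad\text{on}\ \{K_\omega=0\},
\]
so $S_\omega\ge 0$ and is coercive on the constraint, using $\omega>0$ and $\|\cdot\|_{\dot H_c^1}\sim\|\cdot\|_{\dot H^1}$ for $c<c(d)$. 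Coupling this with the weighted Sobolev estimate $\||x|^{-b/(\sigma+2)}u\|_{L^{\sigma+2}}\lesssim\|u\|_{H^1}$, which holds because $0<b<2$ and $\sigma+2<2(d-b)/(d-2)$, the condition $K_\omega(u)=0$ forces $\|u\|_{H^1}\ge\delta>0$; in particular $s(\omega)>0$.

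\emph{Existence of a minimizer.} Take a minimizing sequence $\{u_n\}$, which is $H^1$-bounded by the coercivity above, and extract $u_n\rightharpoonup\phi$ weakly in $H^1$. The decisive analytic ingredient is the compact embedding
\[
H^1(\mathbb R^d)\hookrightarrow L^{\sigma+2}(|x|^{-b}\,dx)\qquad\text{for } 0<b<2,\ 0<\sigma<\tfrac{4-2b}{d-2},
\]
which holds because the weight $|x|^{-b}$ decays at infinity, so the tail $\int_{|x|>R}|x|^{-b}|u_n|^{\sigma+2}\lesssim R^{-b}$ is uniformly small, while on $|x|\le R$ Rellich--Kondrachov together with $b<2$ handles the local part. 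Consequently $\int|x|^{-b}|u_n|^{\sigma+2}\,dx\to\int|x|^{-b}|\phi|^{\sigma+2}\,dx$, so the uniform lower bound $\|u_n\|_{H^1}\ge\delta$ passes to the limit to give $\phi\ne 0$. Weak lower semicontinuity of the quadratic part then yields both $K_\omega(\phi)\le 0$ and $S_\omega(\phi)\le s(\omega)$. If $K_\omega(\phi)<0$, a scaling analysis of $\lambda\mapsto K_\omega(\lambda\phi)$ produces a unique $\lambda^*\in(0,1)$ with $K_\omega(\lambda^*\phi)=0$, and the identity above combined with the wlsc bound $\tfrac{\sigma}{2(\sigma+2)}(\|\phi\|_{\dot H_c^1}^2+\omega\|\phi\|_{L^2}^2)\le s(\omega)$ gives
\[
S_\omega(\lambda^*\phi)=\tfrac{\sigma(\lambda^*)^2}{2(\sigma+2)}\bigl(\|\phi\|_{\dot H_c^1}^2+\omega\|\phi\|_{L^2}^2\bigr)<s(\omega),
\]
contradicting \eqref{GrindEQ__1_11_}. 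Hence $K_\omega(\phi)=0$ and $S_\omega(\phi)=s(\omega)$.

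\emph{Lagrange multiplier and characterization.} The constrained minimizer satisfies $S_\omega'(\phi)=\mu K_\omega'(\phi)$ for some $\mu\in\mathbb R$. Pairing with $\phi$ and using the identity $\langle K_\omega'(\phi),\phi\rangle=2(\|\phi\|_{\dot H_c^1}^2+\omega\|\phi\|_{L^2}^2)-(\sigma+2)\||x|^{-b/(\sigma+2)}\phi\|_{L^{\sigma+2}}^{\sigma+2}=-\sigma\||x|^{-b/(\sigma+2)}\phi\|_{L^{\sigma+2}}^{\sigma+2}\ne 0$, where the last equality uses $K_\omega(\phi)=0$, forces $\mu=0$. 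Thus $\phi\in\mathcal{A}_\omega$ and $\mathcal{G}_\omega\ne\emptyset$; this reverses the earlier inequality, so $s(\omega)=\inf_{\mathcal{A}_\omega}S_\omega$, from which the characterization \eqref{GrindEQ__1_9_} follows (one inclusion is by definition of a ground state, the other is a repetition of the Lagrange step). The main obstacle throughout is the weighted compact embedding in Step 2: the factor $|x|^{-b}$ destroys the translation invariance that would otherwise permit loss of mass at infinity, while $b<2$ keeps the singularity at the origin subcritical, and together these features make the direct method work without first passing to radial minimizers.
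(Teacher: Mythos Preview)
Your proof is correct and follows essentially the same route as the paper: positivity of $s(\omega)$ via the identity $S_\omega=\tfrac{\sigma}{2(\sigma+2)}H_\omega$ on $\{K_\omega=0\}$, existence of a minimizer through the compact embedding $H^1\hookrightarrow L^{\sigma+2}(|x|^{-b}dx)$ and a scaling contradiction if $K_\omega(\phi)<0$, and the Lagrange-multiplier computation to identify $\mathcal{M}_\omega$ with $\mathcal{G}_\omega$. The only cosmetic difference is that in the scaling step you use the $H_\omega$-form of the identity and weak lower semicontinuity of $H_\omega$ (valid since $H_\omega$ is a positive quadratic form for $c<c(d)$), whereas the paper uses the equivalent $L^{\sigma+2}$-form and strong convergence of the weighted norm; both yield the same strict inequality.
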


Using Proposition \ref{prp 1.2.}, we first show the following stability of the set of ground states for the INLS$_{c}$ equation \eqref{GrindEQ__1_1_} in the $L^2$-subcritical case $0<\sigma<\frac{4-2b}{d}$.

\begin{theorem}\label{thm 1.3.}
Let $d\ge 3$, $0<b<2$, $0<\sigma<\frac{4-2b}{2}$, $\omega>0$ and $c\neq 0$ be such that $c<c(d)$.
Then the set of ground states $\mathcal{G}_{\omega}$ is stable in the following sense: for any $\epsilon>0$ there exists $\delta>0$ such that for any initial data $u_{0}$ satisfying
$$
\inf_{\phi_{\omega}\in\mathcal{G}_{\omega}}\left\|u_{0}-\phi_{\omega}\right\|_{H^1}<\delta
$$
the corresponding solution $u(t)$ to \eqref{GrindEQ__1_1_} with initial data $u_{0}$ satisfies
$$
\inf_{\phi_{\omega}\in \mathcal{G}_{\omega}}\left\|u(t)-\phi_{\omega}\right\|_{H^1}<\epsilon,
$$
for all $t\in \mathbb R$.
\end{theorem}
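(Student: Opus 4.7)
The plan is to adapt the classical Cazenave-Lions concentration-compactness scheme to the INLS$_c$ setting, exploiting the key structural feature that the potentials $|x|^{-2}$ and $|x|^{-b}$ break translation invariance, which in fact simplifies the compactness analysis of minimizing sequences. I would argue by contradiction: assume the stability statement fails, so that there exist $\epsilon_0>0$, a sequence $\{u_{0,n}\}\subset H^1$ with $\inf_{\phi\in\mathcal{G}_\omega}\|u_{0,n}-\phi\|_{H^1}\to 0$, and times $t_n\in\mathbb R$ such that the corresponding global solutions $u_n$ (global existence in the mass-subcritical regime is due to Suzuki~\cite{S16}) satisfy $v_n:=u_n(t_n)$ with $\inf_{\phi\in\mathcal{G}_\omega}\|v_n-\phi\|_{H^1}\ge\epsilon_0$. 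A Pohozaev-type identity, obtained by testing the elliptic equation~\eqref{GrindEQ__1_5_} against $\phi$ and against $x\cdot\nabla\phi$, shows that every $\phi\in\mathcal{G}_\omega$ shares a common $\dot H_c^1$-norm, $L^2$-norm, and weighted $L^{\sigma+2}$-norm; in particular, all elements of $\mathcal{G}_\omega$ share a common mass $m_*$ and energy $E_*$. Conservation of $M$ and $E$ together with continuity on $H^1$ then yields $M(v_n)\to m_*$ and $E(v_n)\to E_*$, so $S_\omega(v_n)\to s(\omega)$. The task reduces to showing any such sequence is precompact in $H^1$ with limits in $\mathcal{G}_\omega$.

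To obtain compactness I would reformulate the problem as the mass-constrained variational problem
\[
I(m):=\inf\{E(u):u\in H^1,\,\|u\|_{L^2}^2=m\},
\]
which in the mass-subcritical range is finite for every $m>0$ by a weighted Gagliardo-Nirenberg inequality combined with the coercivity $\|\cdot\|_{\dot H_c^1}\sim\|\cdot\|_{\dot H^1}$ ensured by $c<c(d)$ and \eqref{GrindEQ__1_4_}. Using Proposition~\ref{prp 1.2.}, one checks that $\mathcal{G}_\omega$ coincides with the set of minimizers of $I(m_*)$, so $E_*=I(m_*)$ and $\{v_n\}$ is a minimizing sequence for $I(m_*)$. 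This sequence is bounded in $H^1$; extracting a subsequence, $v_n\rightharpoonup v^*$ weakly in $H^1$.

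The heart of the argument is upgrading weak to strong convergence in $H^1$. The key ingredient is compactness of the embedding $H^1(\mathbb R^d)\hookrightarrow L^{\sigma+2}(\mathbb R^d,|x|^{-b}dx)$ in the subcritical range, which I would establish by splitting space into $B_R$ and $B_R^c$: on $B_R$, Rellich-Kondrachov together with the local integrability of $|x|^{-b}$ (since $b<2\le d$) and a Hardy-Sobolev estimate gives strong convergence; on $B_R^c$, the bound $|x|^{-b}\le R^{-b}$ combined with Sobolev embedding yields a remainder uniformly small as $R\to\infty$. It follows that $\int|x|^{-b}|v_n|^{\sigma+2}dx\to\int|x|^{-b}|v^*|^{\sigma+2}dx$, and, combined with weak lower semicontinuity of the quadratic parts and $E(v_n)\to I(m_*)$, this forces $v^*\neq 0$, $\|v^*\|_{L^2}^2=m_*$, and $E(v^*)=I(m_*)$. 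Hence $v^*$ minimizes $I(m_*)$ and so lies in $\mathcal{G}_\omega$; strong $L^2$ convergence together with $\|v_n\|_{\dot H_c^1}\to\|v^*\|_{\dot H_c^1}$ then upgrades to $v_n\to v^*$ in $H^1$, contradicting $\inf_{\phi\in\mathcal{G}_\omega}\|v_n-\phi\|_{H^1}\ge\epsilon_0$.

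The main obstacle I anticipate is this compactness, and with it the exclusion of a dichotomy scenario in which some mass of $v_n$ escapes to spatial infinity. Here the absence of translation invariance is actually an ally: any such escape would cause the weighted nonlinear functional to strictly lose content (because $|x|^{-b}\to 0$ as $|x|\to\infty$), which is incompatible with minimality once one combines the strict subadditivity $I(m_*)<I(m_1)+I(m_*-m_1)$ for $0<m_1<m_*$ with the two-piece decomposition coming from dichotomy. Handling the simultaneous singularity of $|x|^{-b}$ at the origin and its decay at infinity, while tracking the Hardy functional throughout, is the delicate technical step.
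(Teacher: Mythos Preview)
Your approach is correct and shares the same contradiction setup, conservation-law step, and reliance on the compact embedding $H^1\hookrightarrow L^{\sigma+2}(|x|^{-b}dx)$, but you route the compactness through a different variational characterization of $\mathcal{G}_\omega$. The paper stays with the Nehari formulation of Proposition~\ref{prp 1.2.}: it rescales $v_n=u_n(t_n)$ to $\tilde u_n:=\lambda_n v_n$ with $\lambda_n$ chosen so that $K_\omega(\tilde u_n)=0$, shows $\lambda_n\to 1$, and then simply reruns the compactness argument from Lemma~\ref{lem 2.4.} to extract a strong $H^1$ limit in $\mathcal{G}_\omega$. You instead recast the problem as the mass-constrained minimization $I(m)$, which obliges you to (i) identify $\mathcal{G}_\omega$ with the minimizer set of $I(m_*)$ (this does not drop out of Proposition~\ref{prp 1.2.} for free; you need a short scaling argument linking Nehari minimizers to constrained energy minimizers in the mass-subcritical regime) and (ii) prove strict monotonicity/subadditivity of $m\mapsto I(m)$ to force $\|v^*\|_{L^2}^2=m_*$. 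The paper's route is more economical because the Nehari compactness is already in hand from Section~\ref{sec 2.}; your Cazenave--Lions route is more self-contained and makes the role of the mass-subcritical exponent (finiteness and strict negativity of $I(m)$) explicit. Note also that once you have the compact embedding (which the paper cites as Lemma~\ref{lem 2.2.} rather than reproving), the dichotomy scenario you worry about in your last paragraph is already excluded for the nonlinear term; the subadditivity is needed only to control the $L^2$ part.
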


\begin{remark}\label{rem 1.4.}
\textnormal{Theorem \ref{thm 1.3.} shows that if $d(u_{0,n},\mathcal{G}_{\omega})\to 0$ as $n\to \infty$, then the corresponding solution $u_{n}(t)$ of \eqref{GrindEQ__1_1_} with initial data $u_{0,n}\in H^1$ satisfies
$$
d(u_{n}(t),\mathcal{G}_{\omega})\to 0,~\forall t\in \mathbb R,
$$
where
$$
d(f,X):=\inf_{g \in X}\left\|f-g\right\|_{H^1}~\textrm{for}~f\in H^1~\textrm{and}~X\subset H^1.
$$
}\end{remark}

Next, we study the instability of the ground state standing waves for \eqref{GrindEQ__1_1_} in the mass-critical case $\sigma=\frac{4-2b}{d}$. To this end, we need the following blow-up result for \eqref{GrindEQ__1_1_}.

\begin{proposition}\label{prp 1.5.}
Let $d\ge 3$, $0<b<2$, $\sigma=\frac{4-2b}{d}$ and $c\neq 0$ be such that $c<c(d)$. Let $u$ be the solution to \eqref{GrindEQ__1_1_} defined on the maximal forward time interval of existence $[0,T^{*})$. If $E(u_0)<0$, then either $T^{*}<\infty$ or $T^{*}=\infty$ and  there exits a time sequence $t_n\to \infty$ such that $\left\|u(t_n)\right\|_{\dot{H}^1}\to\infty$ as $n\to \infty$.
A similar statement holds for negative times.
\end{proposition}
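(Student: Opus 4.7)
The strategy is a localized virial argument in the spirit of Ogawa--Tsutsumi, adapted to accommodate the inverse-square potential and the inhomogeneous weight $|x|^{-b}$. I argue by contradiction and treat only the forward-in-time case (the backward case follows by the symmetry $u(t,x)\mapsto\overline{u(-t,x)}$). Assume $T^{*}=\infty$ and that $M:=\sup_{t\ge 0}\|u(t)\|_{\dot H^{1}}<\infty$. Choose a smooth radial cutoff $\phi_{R}\in C^{\infty}(\mathbb R^{d})$ with $\phi_{R}(x)=|x|^{2}$ for $|x|\le R$, $\phi_{R}$ constant for $|x|\ge 2R$, $\|\phi_{R}\|_{L^{\infty}}\lesssim R^{2}$, and Hessian $\partial_{j}\partial_{k}\phi_{R}\le 2\delta_{jk}$ as symmetric matrices on $\mathbb R^{d}$. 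Set $V_{R}(t):=\int\phi_{R}(x)|u(t,x)|^{2}dx$. A standard computation based on \eqref{GrindEQ__1_1_}, justified for $H^{1}$-data without finite variance via approximation by regular decaying data, yields the localized virial identity
\begin{align*}
V_{R}''(t)&=\int\!\bigl[4\partial_{j}\partial_{k}\phi_{R}\,\mathrm{Re}(\partial_{j}u\,\overline{\partial_{k}u})-\Delta^{2}\phi_{R}\,|u|^{2}\bigr]dx+2c\!\int\!\nabla\phi_{R}\cdot\nabla(|x|^{-2})\,|u|^{2}dx\\
&\quad -\frac{2\sigma}{\sigma+2}\!\int\!\Delta\phi_{R}\,|x|^{-b}|u|^{\sigma+2}dx+\frac{4}{\sigma+2}\!\int\!\nabla\phi_{R}\cdot\nabla(|x|^{-b})\,|u|^{\sigma+2}dx.
\end{align*}

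On $\{|x|\le R\}$, where $\phi_{R}(x)=|x|^{2}$, the integrands reduce to the familiar $\phi=|x|^{2}$ case. Using the $-2$-homogeneity of $|x|^{-2}$ (so that $\nabla|x|^{2}\cdot\nabla(|x|^{-2})=-4|x|^{-2}$) together with the mass-critical identity $d\sigma+2b=4$ (equivalent to $\sigma=\frac{4-2b}{d}$), this portion integrates to exactly $8\|u\|_{\dot H_{c}^{1}}^{2}-\frac{16}{\sigma+2}\!\int\!|x|^{-b}|u|^{\sigma+2}dx=16E(u(t))=16E(u_{0})$ by the conservation of energy. The remainder, supported in $\{|x|>R\}$, splits into four pieces that can each be controlled: the kinetic remainder is nonpositive because of the matrix inequality $\phi_{R}''-2I\le 0$ paired with the positive semidefinite matrix $\mathrm{Re}(\partial_{j}u\,\overline{\partial_{k}u})$; the biharmonic term contributes $O(R^{-2}\|u_{0}\|_{L^{2}}^{2})$ since $|\Delta^{2}\phi_{R}|\lesssim R^{-2}$; the potential remainder is $O(R^{-2}\|u_{0}\|_{L^{2}}^{2})$ using $|x|^{-2}\le R^{-2}$ on this set; the nonlinear remainder is $O(R^{-b})$ using $|x|^{-b}\le R^{-b}$ together with the Gagliardo--Nirenberg bound $\|u\|_{L^{\sigma+2}}^{\sigma+2}\lesssim\|\nabla u\|_{L^{2}}^{2-b}\|u\|_{L^{2}}^{\sigma+b}\lesssim 1$, which is finite under the contradiction hypothesis and mass conservation. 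Summing,
$$V_{R}''(t)\le 16E(u_{0})+C_{M}\bigl(R^{-b}+R^{-2}\bigr)\quad\text{uniformly in }t\ge 0.$$

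Since $E(u_{0})<0$, pick $R$ so large that $V_{R}''(t)\le 8E(u_{0})<0$ for all $t\ge 0$. Integrating twice gives $V_{R}(t)\le V_{R}(0)+V_{R}'(0)\,t+4E(u_{0})\,t^{2}$; the elementary bounds $V_{R}(0)\lesssim R^{2}\|u_{0}\|_{L^{2}}^{2}$ and $|V_{R}'(0)|\le 2\|\nabla\phi_{R}\|_{L^{\infty}}\|u_{0}\|_{L^{2}}\|\nabla u_{0}\|_{L^{2}}\lesssim R$ then force $V_{R}(t)\to-\infty$ as $t\to\infty$, contradicting $V_{R}\ge 0$. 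The main obstacle is the careful bookkeeping in the error analysis: constructing $\phi_{R}$ so that its Hessian is globally dominated by $2I$ (which guarantees nonpositivity of the kinetic remainder without any radiality or additional $H^{1}$-estimate), and rigorously justifying the virial identity at the $H^{1}$-level by a regularization and limiting procedure, since $u_{0}$ is not assumed to have finite variance.
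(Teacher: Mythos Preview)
Your proposal is correct and follows essentially the same localized virial contradiction argument as the paper: assume global boundedness of $\|u(t)\|_{\dot H^{1}}$, use the mass-critical identity $d\sigma+2b=4$ to reduce the main virial term to $16E(u_{0})$, absorb the $O(R^{-2})+O(R^{-b})$ remainders by choosing $R$ large, and integrate twice to violate $V_{R}\ge 0$. The only cosmetic difference is that the paper invokes the ready-made localized virial estimate of Lemma~\ref{lem 3.1.} (from \cite{AJK23}) as a black box, whereas you rederive it inline with the explicit Hessian-bound argument for the kinetic remainder.
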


\begin{remark}\label{rem 1.6.}
\textnormal{Proposition \ref{prp 1.5.} extends the Theorem 1.2 of \cite{CG21} for radial or finite-variance data to non-radial data.}
\end{remark}
\begin{theorem}\label{thm 1.7.}
Let $d\ge 3$, $0<b<2$, $\sigma=\frac{4-2b}{d}$, $\omega>0$ and $c\neq 0$ be such that $c<c(d)$.
If $\phi_{\omega}\in \mathcal{G}_{\omega}$, then the ground state standing wave $e^{i\omega t}\phi_{\omega}(x)$ of \eqref{GrindEQ__1_1_} is unstable in $H^1$ in the following sense: for any $\epsilon>0$, there exists $u_{0}\in H^1$ such that $\left\|u_{0}-\phi_{\omega}\right\|_{H^1}<\epsilon$ and the solution $u(t)$ of \eqref{GrindEQ__1_1_} with initial data $u_{0}$ blows up in finite or infinite time.
\end{theorem}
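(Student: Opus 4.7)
The plan is to produce the desired instability by a simple scalar dilation of the ground state. For $\lambda > 1$ set $u_{0,\lambda} := \lambda \phi_\omega$; then $\|u_{0,\lambda} - \phi_\omega\|_{H^1} = (\lambda-1)\|\phi_\omega\|_{H^1}$, so choosing $\lambda$ sufficiently close to $1$ meets the proximity requirement. I shall then show that $E(u_{0,\lambda})<0$ for every $\lambda>1$ and appeal to Proposition \ref{prp 1.5.} to conclude that the corresponding solution of \eqref{GrindEQ__1_1_} blows up in finite or infinite time.

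The key ingredient is the mass-critical identity $E(\phi_\omega)=0$. To extract it I would apply the $L^{2}$-preserving rescaling $\phi_\omega^{(\mu)}(x):=\mu^{d/2}\phi_\omega(\mu x)$ to the action $S_\omega$. Using $\|\phi_\omega^{(\mu)}\|_{L^2}=\|\phi_\omega\|_{L^2}$, $\|\phi_\omega^{(\mu)}\|_{\dot H_c^1}^{2}=\mu^{2}\|\phi_\omega\|_{\dot H_c^1}^{2}$, and the mass-critical relation $\tfrac{d\sigma}{2}+b=2$, a direct computation yields
\[
S_\omega\bigl(\phi_\omega^{(\mu)}\bigr)=\mu^{2}E(\phi_\omega)+\frac{\omega}{2}\|\phi_\omega\|_{L^2}^{2}.
\]
Since $\phi_\omega$ is a critical point of $S_\omega$ on $H^1$ (by Proposition \ref{prp 1.2.}), differentiating this identity in $\mu$ at $\mu=1$ gives $2E(\phi_\omega)=0$, as required.

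Granted $E(\phi_\omega)=0$, one has $\tfrac{1}{2}\|\phi_\omega\|_{\dot H_c^1}^{2}=\tfrac{1}{\sigma+2}\int_{\mathbb R^d}|x|^{-b}|\phi_\omega|^{\sigma+2}dx$, and substituting into $E(\lambda\phi_\omega)$ gives
\[
E(\lambda\phi_\omega)=\frac{\lambda^{2}-\lambda^{\sigma+2}}{2}\|\phi_\omega\|_{\dot H_c^1}^{2}<0\quad\text{for every } \lambda>1,
\]
since $\sigma>0$ implies $\lambda^{\sigma+2}>\lambda^{2}$. Proposition \ref{prp 1.5.} then provides the blow-up conclusion for $u_{0,\lambda}$ whenever $\lambda>1$ is chosen so close to $1$ that $(\lambda-1)\|\phi_\omega\|_{H^{1}}<\epsilon$, which proves the theorem.

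The main potential obstacle is the rigorous justification of the Pohozaev-type identity $E(\phi_\omega)=0$: one must verify that the curve $\mu\mapsto \phi_\omega^{(\mu)}$ is differentiable in $H^1$ at $\mu=1$, equivalently that $\tfrac{d}{2}\phi_\omega+x\cdot\nabla\phi_\omega\in H^{1}$, or else establish the identity directly by multiplying \eqref{GrindEQ__1_5_} by $x\cdot\nabla\bar\phi_\omega$ and integrating by parts, paying due care to the contribution of the inverse-square potential near $x=0$. Standard elliptic regularity, together with the exponential-type decay at infinity forced by $\omega>0$, should render this step a matter of routine bookkeeping rather than a substantive difficulty.
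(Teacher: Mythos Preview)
Your proof is correct and follows the same strategy as the paper's: perturb $\phi_\omega$ to obtain initial data with negative energy and then invoke Proposition~\ref{prp 1.5.}. The paper uses the two-parameter family $u_\lambda^\mu(x)=\mu\lambda^{d/2}\phi_\omega(\lambda x)$ together with the Pohozaev identities of Lemma~\ref{lem 3.2.} (which yield $E(\phi_\omega)=0$ in the mass-critical case), but the spatial dilation parameter $\lambda$ is superfluous in that computation, so your pure scalar perturbation $\lambda\phi_\omega$ is essentially the paper's argument specialized to $\lambda=1$, with your scaling derivation of $E(\phi_\omega)=0$ being an equivalent route to Lemma~\ref{lem 3.2.}.
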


\begin{remark}\label{rem 1.8.}
\textnormal{Theorem \ref{thm 1.7.} shows that there exists $\{u_{0,n}\}\subset H^1$ such that $u_{0,n}\to \phi_{\omega}$ in $H^1$ as $n\to \infty$ and the corresponding solution $u_n$ of \eqref{GrindEQ__1_1_} with initial data $u_{0,n}$ blows up in finite or infinite time for any $n\ge 1$.}
\end{remark}

\begin{remark}\label{rem 1.9.}
\textnormal{Theorem \ref{thm 1.7.} extends the instability results of \cite{BDZ18,G12} for the mass-critical NLS$_{c}$ equation and INLS equation to the INLS$_{c}$ equation.}
\end{remark}

Finally, we have the following instability result in the intercritical case $\frac{4-2b}{d}<\sigma<\frac{4-2b}{d-2}$.
\begin{theorem}\label{thm 1.10.}
Let $d\ge 3$, $0<b<2$, $\frac{4-2b}{d}<\sigma<\frac{4-2b}{d-2}$, $\omega>0$ and $c\neq 0$ be such that $c<c(d)$.
If $\phi_{\omega}\in \mathcal{G}_{\omega}$, then the ground state standing wave $e^{i\omega t}\phi_{\omega}(x)$ of \eqref{GrindEQ__1_1_} is unstable in $H^1$ in the sense of Theorem \ref{thm 1.7.}
\end{theorem}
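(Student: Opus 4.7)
The plan is to adapt the classical Berestycki--Cazenave scheme together with a virial argument. For $\lambda>1$ close to $1$, I consider the $L^{2}$-preserving rescaling
\[
u_{0}^{\lambda}(x):=\lambda^{d/2}\phi_{\omega}(\lambda x),
\]
and let $u(t)$ denote the corresponding $H^{1}$-solution of \eqref{GrindEQ__1_1_}. Since ground states of \eqref{GrindEQ__1_5_} decay exponentially at infinity (a routine bootstrap using $c<c(d)$), one has $\phi_{\omega}\in\Sigma:=\{v\in H^{1}:|x|v\in L^{2}\}$, hence $u_{0}^{\lambda}\in\Sigma$ and $u_{0}^{\lambda}\to \phi_{\omega}$ in $H^{1}$ as $\lambda\to 1^{+}$, while $M(u_{0}^{\lambda})=M(\phi_{\omega})$. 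Combining $K_{\omega}(\phi_{\omega})=0$ with the standard Pohozaev identity for \eqref{GrindEQ__1_5_} gives $P(\phi_{\omega})=0$, where
\[
P(u):=\left\|u\right\|_{\dot{H}_{c}^{1}}^{2}-\frac{d\sigma+2b}{2(\sigma+2)}\int_{\mathbb R^{d}}|x|^{-b}|u|^{\sigma+2}dx
\]
is the virial functional. A direct calculation using $d\sigma+2b>4$ in the intercritical range shows that for $\lambda>1$ close to $1$, one has $S_{\omega}(u_{0}^{\lambda})<s(\omega)$, $K_{\omega}(u_{0}^{\lambda})<0$ and $P(u_{0}^{\lambda})<0$.

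Next I prove $K_{\omega}(u(t))<0$ throughout the maximal interval. By conservation, $S_{\omega}(u(t))=S_{\omega}(u_{0}^{\lambda})<s(\omega)$ and $M(u(t))=M(\phi_{\omega})$; if $K_{\omega}(u(t_{0}))=0$ at some first time, then $u(t_{0})\neq 0$ and Proposition \ref{prp 1.2.} forces $S_{\omega}(u(t_{0}))\ge s(\omega)$, contradicting the upper bound. A scaling argument $v\mapsto\mu v$ gives
\[
s(\omega)\le\frac{\sigma}{2(\sigma+2)}\bigl(\left\|v\right\|_{\dot{H}_{c}^{1}}^{2}+\omega\left\|v\right\|_{L^{2}}^{2}\bigr)\quad\text{whenever }v\in H^{1}\setminus\{0\},\ K_{\omega}(v)\le 0,
\]
since one picks $\mu\in(0,1]$ with $K_{\omega}(\mu v)=0$ and applies Proposition \ref{prp 1.2.} together with the identity $S_{\omega}(w)=\frac{\sigma}{2(\sigma+2)}(\left\|w\right\|_{\dot{H}_{c}^{1}}^{2}+\omega\left\|w\right\|_{L^{2}}^{2})$ valid on $\{K_{\omega}=0\}$. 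Applied to $u(t)$, together with $M(u(t))=M(\phi_{\omega})$ and the Pohozaev--Nehari identities for $\phi_{\omega}$ (which yield $\frac{2(\sigma+2)}{\sigma}s(\omega)=\left\|\phi_{\omega}\right\|_{\dot{H}_{c}^{1}}^{2}+\omega M(\phi_{\omega})$), this produces the pointwise lower bound $\left\|u(t)\right\|_{\dot{H}_{c}^{1}}^{2}\ge\left\|\phi_{\omega}\right\|_{\dot{H}_{c}^{1}}^{2}$ for all $t$.

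Eliminating the $L^{\sigma+2}$ term through the definition of $S_{\omega}$ furnishes the algebraic identity
\[
P(u)=\frac{d\sigma+2b}{2}S_{\omega}(u)-\frac{d\sigma+2b-4}{4}\left\|u\right\|_{\dot{H}_{c}^{1}}^{2}-\frac{(d\sigma+2b)\omega}{4}\left\|u\right\|_{L^{2}}^{2},
\]
valid for every $u\in H^{1}$. Since $d\sigma+2b-4>0$, combining this identity with $P(\phi_{\omega})=0$, the lower bound on $\left\|u(t)\right\|_{\dot{H}_{c}^{1}}^{2}$, and $S_{\omega}(u_{0}^{\lambda})<s(\omega)$ gives the uniform estimate
\[
P(u(t))\le\frac{d\sigma+2b}{2}\bigl[S_{\omega}(u_{0}^{\lambda})-s(\omega)\bigr]=:-\eta_{0}<0.
\]
Since $u_{0}^{\lambda}\in\Sigma$, the second moment $V(t):=\int_{\mathbb R^{d}}|x|^{2}|u(t)|^{2}dx$ is $C^{2}$ and satisfies the standard virial identity $V''(t)=8P(u(t))\le -8\eta_{0}$ on the maximal interval, so $V(t)\le V(0)+V'(0)t-4\eta_{0}t^{2}$ becomes negative in finite time, contradicting $V\ge 0$; hence $u$ must blow up in finite time. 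The main obstacle is the invariance step for $K_{\omega}$ together with the lower bound $\left\|u(t)\right\|_{\dot{H}_{c}^{1}}^{2}\ge\left\|\phi_{\omega}\right\|_{\dot{H}_{c}^{1}}^{2}$: this is precisely where the variational content of Proposition \ref{prp 1.2.} is transferred into the virial functional driving the blow-up.
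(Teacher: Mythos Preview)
Your argument is correct and follows the Berestycki--Cazenave scheme, but it diverges from the paper's proof in two places. First, to get the uniform negative bound on the virial functional $P=G$, the paper proves an alternative variational characterization $s(\omega)=\inf\{S_\omega(v):G(v)=0\}$ (Lemma~\ref{lem 3.5.}), establishes invariance of the set $\mathcal{B}_\omega=\{S_\omega<s(\omega),\,G<0\}$ (Lemma~\ref{lem 3.6.}), and then derives the scaling inequality $G(v)\le 2(S_\omega(v)-s(\omega))$ on $\mathcal{B}_\omega$ (Lemma~\ref{lem 3.7.}). You instead stay at the $K_\omega$-level characterization of Proposition~\ref{prp 1.2.}, extract the kinetic lower bound $\|u(t)\|_{\dot H_c^1}^2\ge\|\phi_\omega\|_{\dot H_c^1}^2$, and feed it into the algebraic identity linking $P$, $S_\omega$, $\|u\|_{\dot H_c^1}^2$ and $\|u\|_{L^2}^2$; this is a legitimate and arguably more direct route to the same uniform estimate. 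Second, for the conclusion you use the \emph{exact} variance identity $V''=8P$ on $\Sigma$-data and obtain \emph{finite}-time blow-up, whereas the paper applies the localized-virial criterion (Lemma~\ref{lem 3.4.}) and only concludes blow-up in finite \emph{or} infinite time, which is all the statement asks. Your conclusion is therefore stronger but rests on the extra claim $\phi_\omega\in\Sigma$; exponential decay of ground states for $\omega>0$ is indeed standard, but it is not established in the paper, so either include the short Agmon-type argument or, to match the stated theorem verbatim, replace your last paragraph by an appeal to Lemma~\ref{lem 3.4.}.
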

\begin{remark}\label{rem 1.11.}
\textnormal{Theorem \ref{thm 1.10.} extends the instability results of \cite{AC21,D21} for the intercritical INLS equation and NLS$_{c}$ equation to the INLS$_{c}$ equation.}
\end{remark}

This paper is organized as follows. In Section 2, we prove the existence of ground states (Proposition \ref{prp 1.2.}). In Section 3, we study the stability and instability of the ground states for the INLS$_c$ equation \eqref{GrindEQ__1_1_}.

\section{Existence and variational characterization of ground states}\label{sec 2.}

In this section, we prove Proposition \ref{prp 1.2.}.
Let us denote the $\omega$-Hardy functional by
$$
H_{\omega}(u):=\left\|u\right\|_{\dot{H}_{c}^{1}}^{2}+\omega \left\|u\right\|_{L^2}^{2}.
$$
Using the sharp Hardy inequality \eqref{GrindEQ__1_4_}, we can see that for $c<c(d)$ and $\omega>0$ fixed,
\begin{equation}\label{GrindEQ__2_1_}
H_{\omega}(u)\sim\left\|u\right\|_{H^1}^{2}.
\end{equation}
We note that the action functional can be rewritten as
\begin{equation}\label{GrindEQ__2_2_}
S_{\omega}(u)=\frac{1}{2}K_{\omega}(u)+\frac{\sigma}{2(\sigma+2)}\left\||x|^{\frac{-b}{\sigma+2}}u\right\|_{L^{\sigma+2}}^{\sigma+2}
=\frac{1}{\sigma+2}K_{\omega}(u)+\frac{\sigma}{2(\sigma+2)}H_{\omega}(u).
\end{equation}

We recall the following sharp Gagliardo--Nirenberg inequality.
\begin{lemma}[\cite{CG21}]\label{lem 2.1.}
Let $d\ge 3$, $0<\sigma<\frac{4-2b}{d-2}$, $c>-c(d)$ and $0<b<2$. Then for $f\in H^{1}$, we have
\begin{equation}\label{GrindEQ__2_3_}
\left\||x|^{\frac{-b}{\sigma+2}}f\right\|_{L^{\sigma+2}}^{\sigma+2}\le C_{GN}\left\|f\right\|_{\dot{H}_{c}^{1}}^{\frac{d\sigma+2b}{2}}
\left\|f\right\|_{L^{2}}^{\frac{4-2b-\sigma(d-2)}{2}}.
\end{equation}
The equality in \eqref{GrindEQ__2_3_} is attained by a function $\phi_{1}\in H^{1}$, which is a positive solution of \eqref{GrindEQ__1_5_} with $\omega=1$.
\end{lemma}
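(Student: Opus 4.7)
My plan is a Weinstein-type variational argument. Define the functional
\[
W(f) := \frac{\left\|f\right\|_{\dot H^1_c}^{\frac{d\sigma+2b}{2}}\, \left\|f\right\|_{L^2}^{\frac{4-2b-\sigma(d-2)}{2}}}{\left\||x|^{\frac{-b}{\sigma+2}} f\right\|_{L^{\sigma+2}}^{\sigma+2}}
\]
on $H^1\setminus\{0\}$, so that the sharp constant in \eqref{GrindEQ__2_3_} is $C_{GN}=\bigl(\inf_f W(f)\bigr)^{-1}$. The exponents $\frac{d\sigma+2b}{2}$ and $\frac{4-2b-\sigma(d-2)}{2}$ are forced by requiring $W$ to be invariant under the two-parameter rescaling $f(x)\mapsto \lambda f(\mu x)$, which reflects both mass rescaling and the INLS$_c$ scaling. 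That $C_{GN}<\infty$ follows from a Caffarelli--Kohn--Nirenberg/weighted Hardy--Sobolev interpolation of the form $\||x|^{-b/(\sigma+2)}f\|_{L^{\sigma+2}}\lesssim\|\nabla f\|_{L^2}^{\theta}\|f\|_{L^2}^{1-\theta}$, with $\theta$ fixed by scaling, combined with the norm equivalence $\|f\|_{\dot H^1_c}\sim\|f\|_{\dot H^1}$ recorded in Section~\ref{sec 1.}.

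To show the infimum is attained, I would take a minimizing sequence $\{f_n\}$ and use the two scaling parameters $\lambda,\mu$ to normalize $\|f_n\|_{\dot H^1_c}=\|f_n\|_{L^2}=1$; then $\{f_n\}$ is bounded in $H^1$ and $f_n\rightharpoonup f^{\ast}$ weakly in $H^1$ along a subsequence. The decisive analytic step is a compactness lemma: the embedding $H^1(\mathbb R^d)\hookrightarrow L^{\sigma+2}(|x|^{-b}\,dx)$ is compact under the hypotheses of the lemma. I would prove this by splitting $\mathbb R^d$ into an inner ball $\{|x|\le\varepsilon\}$, a spherical shell $\{\varepsilon<|x|<R\}$, and an exterior $\{|x|\ge R\}$: weighted H\"older together with the Hardy--Sobolev inequality controls the contribution of the inner and outer pieces uniformly in $n$ (exploiting $0<b<2$ on the inner piece and $b>0$ on the outer), while Rellich--Kondrachov gives strong convergence on the shell. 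Hence $f_n\to f^{\ast}$ in $L^{\sigma+2}(|x|^{-b}\,dx)$; weak lower semicontinuity of the $H^1$-norms forces $W(f^{\ast})\le C_{GN}^{-1}$, so $f^{\ast}$ is a minimizer. Replacing $f^{\ast}$ by $|f^{\ast}|$ keeps it a minimizer (since $\|\nabla|f|\|_{L^2}\le\|\nabla f\|_{L^2}$ and $\||x|^{-1}|f|\|_{L^2}=\||x|^{-1}f\|_{L^2}$), and the Euler--Lagrange equation, after absorbing the two Lagrange multipliers by a further $(\lambda,\mu)$ rescaling, is precisely \eqref{GrindEQ__1_5_} with $\omega=1$. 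Strict positivity on $\mathbb R^d\setminus\{0\}$ then follows from a strong maximum principle for the operator $-\Delta-c|x|^{-2}$, whose positivity is guaranteed by $c<c(d)$.

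The main obstacle is the compactness step. Both the inverse-square potential $c|x|^{-2}$ and the nonlinear weight $|x|^{-b}$ are singular at the origin and interact with the scaling symmetry, so concentration of a minimizing sequence at the origin is a priori possible; meanwhile translation invariance is already broken by the weights, leaving scalings at $0$ and escape to $\infty$ as the only bubbling modes. Escape to infinity is killed by the decay of $|x|^{-b}$ at infinity ($b>0$), and concentration at the origin is controlled by the strict $H^1$-subcriticality $\sigma<\frac{4-2b}{d-2}$ together with $|x|^{-b}\in L^1_{\mathrm{loc}}$ (since $b<d$), which leaves room to run the tail estimates. Because $\|\cdot\|_{\dot H^1_c}\sim\|\cdot\|_{\dot H^1}$ for $c<c(d)$, these compactness estimates can all be carried out in the standard Sobolev setting and then transferred back to the Hardy functional, keeping the role of the potential cleanly separated from the variational analysis.
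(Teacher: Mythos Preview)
The paper does not prove Lemma~\ref{lem 2.1.}; it is quoted from \cite{CG21} without proof, so there is no argument in the paper to compare against. Your outline is the standard Weinstein variational scheme and is essentially what one expects the cited reference to contain. In particular, the compactness step you sketch---$H^{1}\hookrightarrow L^{\sigma+2}(|x|^{-b}\,dx)$ compact---is exactly Lemma~\ref{lem 2.2.} of the present paper (also cited, from \cite{CG16,CG21}), so you are rebuilding an ingredient the paper takes as a black box rather than taking a different route.

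Two small points worth tightening. First, the positivity of the minimizer via a strong maximum principle for $-\Delta-c|x|^{-2}$ on $\mathbb R^{d}\setminus\{0\}$ is the most delicate assertion in your outline: the singular coefficient prevents a direct appeal to classical interior maximum principles, and in the cited literature this is typically handled either by working in weighted spaces adapted to the operator or by an ODE analysis after reducing to radial minimizers (radial symmetry of minimizers follows from Schwarz symmetrization when $c\le 0$, but for $0<c<c(d)$ requires a separate argument). You should flag which of these routes you intend. Second, in your replacement $f^{\ast}\mapsto|f^{\ast}|$ you use $\|\nabla|f|\|_{L^2}\le\|\nabla f\|_{L^2}$ to conclude $\|\,|f|\,\|_{\dot H^1_c}\le\|f\|_{\dot H^1_c}$; this is fine for either sign of $c$ since the weighted $L^2$ term is unchanged, but it is worth saying explicitly that the inequality goes the right way in the numerator of $W$.
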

\begin{lemma}[\cite{CG16,CG21}]\label{lem 2.2.}
If $d\ge 3$, $0<b<2$ and $0<\sigma<\frac{4-2b}{d-2}$, then $H^{1}$ is compactly embedded in $L^{\sigma+2}\left(|x|^{-b}dx\right)$.
\end{lemma}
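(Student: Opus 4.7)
My plan is to show that every bounded sequence $\{f_n\}$ in $H^1(\mathbb R^d)$ admits a subsequence converging in $L^{\sigma+2}(|x|^{-b}\,dx)$. By reflexivity of $H^1$, extract a weakly convergent subsequence $f_n\rightharpoonup f$; combining the classical Rellich--Kondrachov theorem on balls $B_R$ with a diagonal argument yields a further subsequence with $f_n\to f$ almost everywhere on $\mathbb R^d$. The task then reduces to proving
\[
I_n:=\int_{\mathbb R^d} |x|^{-b}\,|f_n-f|^{\sigma+2}\,dx \longrightarrow 0,
\]
and the natural approach is to decompose $I_n$ over the three regions $\{|x|<r\}$, $\{r\le |x|\le R\}$, and $\{|x|>R\}$, with $r$ small and $R$ large to be chosen.

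The outer and middle pieces are straightforward. The hypothesis $\sigma<\frac{4-2b}{d-2}$ implies in particular $\sigma+2<\frac{2d}{d-2}$, so the Sobolev embedding $H^1\hookrightarrow L^{\sigma+2}$ gives a uniform bound $\|f_n-f\|_{L^{\sigma+2}}\le C$; hence
\[
\int_{|x|>R} |x|^{-b}\,|f_n-f|^{\sigma+2}\,dx \le R^{-b}\,\|f_n-f\|_{L^{\sigma+2}}^{\sigma+2}\le CR^{-b},
\]
which is less than $\epsilon/3$ for $R$ large, uniformly in $n$. On the annulus $\{r\le|x|\le R\}$ the weight is bounded by $r^{-b}$, and Rellich--Kondrachov (applicable since $\sigma+2<2^*$) gives $f_n\to f$ strongly in $L^{\sigma+2}(B_R)$, so this contribution tends to $0$ as $n\to\infty$ for any fixed $r,R$.

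The main obstacle is controlling the singular inner region $\{|x|<r\}$, which I would handle by H\"older's inequality with some $p\in(1,\infty)$:
\[
\int_{|x|<r} |x|^{-b}\,|f_n-f|^{\sigma+2}\,dx \le \left(\int_{|x|<r} |x|^{-bp}\,dx\right)^{1/p}\|f_n-f\|_{L^{(\sigma+2)p'}}^{\sigma+2}.
\]
Finiteness of the weight integral forces $bp<d$, while boundedness of the Lebesgue norm by the $H^1$ norm (via Sobolev embedding) forces $(\sigma+2)p'\le\frac{2d}{d-2}$. A short computation shows that these two constraints admit a common solution $p$ precisely when $\sigma<\frac{4-2b}{d-2}$, which is exactly our hypothesis. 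For such a $p$ the weight integral is $O(r^{(d-bp)/p})$, so shrinking $r$ makes this piece smaller than $\epsilon/3$ uniformly in $n$. Summing the three estimates yields $I_n\to 0$. The delicate point is precisely this sharp matching of H\"older exponents at the origin: the condition $\sigma<\frac{4-2b}{d-2}$ is not incidental but exactly what keeps the singular piece tractable, and the compactness fails at the endpoint $\sigma=\frac{4-2b}{d-2}$.
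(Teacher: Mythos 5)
Your argument is correct, and all the exponent arithmetic checks out: the two Hölder constraints $bp<d$ and $(\sigma+2)p'\le\frac{2d}{d-2}$ on the inner region are indeed simultaneously satisfiable exactly when $\sigma<\frac{4-2b}{d-2}$, and the three-region decomposition with the uniform-in-$n$ bounds on the inner and outer pieces plus Rellich--Kondrachov on the annulus closes the proof. The paper itself gives no proof of this lemma --- it is imported by citation from \cite{CG16,CG21} --- and your argument is essentially the standard one found there, so there is nothing further to reconcile.
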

\begin{lemma}\label{lem 2.3.}
Let $\omega>0$. Then we have $s(\omega)>0$, where $s(\omega)$ is given in \eqref{GrindEQ__1_11_}.
\end{lemma}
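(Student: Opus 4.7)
The plan is to show a coercivity-type lower bound on the Nehari set: for any $u \in H^1\setminus\{0\}$ with $K_\omega(u) = 0$, we force $H_\omega(u)$ to stay away from $0$, and then use the identity \eqref{GrindEQ__2_2_} to conclude.

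First, I would combine the sharp Gagliardo--Nirenberg inequality \eqref{GrindEQ__2_3_} with the equivalence $\|u\|_{\dot H^1_c}^2 \le H_\omega(u)$ and $\|u\|_{L^2}^2 \le \omega^{-1} H_\omega(u)$ (coming from $\omega > 0$) to obtain
\begin{equation}\nonumber
\bigl\||x|^{\frac{-b}{\sigma+2}} u\bigr\|_{L^{\sigma+2}}^{\sigma+2}
\le C_{GN}\,\|u\|_{\dot H^1_c}^{\frac{d\sigma+2b}{2}} \|u\|_{L^2}^{\frac{4-2b-\sigma(d-2)}{2}}
\le C\, H_\omega(u)^{\frac{\sigma+2}{2}},
\end{equation}
where $C = C(C_{GN},\omega,\sigma,d,b)$ and I use that the exponents add up to $\frac{d\sigma+2b}{4}+\frac{4-2b-\sigma(d-2)}{4}=\frac{\sigma+2}{2}$.

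Next, the hypothesis $K_\omega(u)=0$ gives $H_\omega(u) = \||x|^{\frac{-b}{\sigma+2}}u\|_{L^{\sigma+2}}^{\sigma+2}$, so
\begin{equation}\nonumber
H_\omega(u) \le C\, H_\omega(u)^{\frac{\sigma+2}{2}}.
\end{equation}
Since $u\neq 0$, \eqref{GrindEQ__2_1_} yields $H_\omega(u) > 0$, and dividing by $H_\omega(u)$ produces
\begin{equation}\nonumber
H_\omega(u) \ge C^{-\frac{2}{\sigma}} > 0.
\end{equation}
This is the key lower bound; notice that it crucially uses $\sigma > 0$.

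Finally, the second identity in \eqref{GrindEQ__2_2_} evaluated on the Nehari constraint $K_\omega(u)=0$ reduces to $S_\omega(u) = \frac{\sigma}{2(\sigma+2)} H_\omega(u)$, hence $S_\omega(u) \ge \frac{\sigma}{2(\sigma+2)} C^{-2/\sigma}$. Taking the infimum over admissible $u$ gives $s(\omega) \ge \frac{\sigma}{2(\sigma+2)} C^{-2/\sigma} > 0$. There is no real obstacle here; the only thing to be careful about is the bookkeeping of the Gagliardo--Nirenberg exponents and ensuring that the equivalence of norms in \eqref{GrindEQ__2_1_} is available, which requires exactly the assumptions $c < c(d)$ and $\omega > 0$ already in force.
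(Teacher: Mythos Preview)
Your proof is correct and follows essentially the same approach as the paper: both use the Gagliardo--Nirenberg inequality together with $K_\omega(u)=0$ to get a uniform positive lower bound, and then apply identity \eqref{GrindEQ__2_2_} to conclude. The only cosmetic difference is that the paper phrases the lower bound in terms of $\||x|^{-b/(\sigma+2)}u\|_{L^{\sigma+2}}^{\sigma+2}$ while you phrase it in terms of $H_\omega(u)$, but under $K_\omega(u)=0$ these coincide.
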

\begin{proof}
Let $u\in H^1(\mathbb R^d)\setminus \{0\}$ be such that $K_{\omega}(u)=0$. By \eqref{GrindEQ__2_2_}, we can see that
$$
\left\||x|^{\frac{-b}{\sigma+2}}u\right\|_{L^{\sigma+2}}^{\sigma+2}=H_{\omega}(u).
$$
Using this fact and Lemma \ref{lem 2.1.}, we infer that
$$
\left\||x|^{\frac{-b}{\sigma+2}}u\right\|_{L^{\sigma+2}}^{\sigma+2} \le C H_{\omega}(u)^{\frac{\sigma+2}{2}}=
C \left(\left\||x|^{\frac{-b}{\sigma+2}}u\right\|_{L^{\sigma+2}}^{\sigma+2}\right)^{\frac{\sigma+2}{2}},
$$
for some $C>0$. This yields that
$$
\left\||x|^{\frac{-b}{\sigma+2}}u\right\|_{L^{\sigma+2}}^{\sigma+2}\ge C^{-\frac{2}{\sigma}}>0.
$$
Hence, we can see that
$$
S_{\omega}(u)=\frac{\sigma}{2(\sigma+2)}\left\||x|^{\frac{-b}{\sigma+2}}u\right\|_{L^{\sigma+2}}^{\sigma+2}\ge \frac{\sigma}{2(\sigma+2)}C^{-\frac{2}{\sigma}}.
$$
Taking the infimum over $u\in H^1(\mathbb R^d)\setminus \{0\}$ with $K_{\omega}(u)=0$, we get the desired result.
\end{proof}

We denote the set of all minimizers of \eqref{GrindEQ__1_11_} by
$$
\mathcal{M}_{\omega}:=\{u\in H^1:~S_{\omega}(u)=s(\omega),~K_{\omega}(u)=0\}
$$
\begin{lemma}\label{lem 2.4.}
Let $\omega>0$. The set $\mathcal{M}_{\omega}$ is not empty.
\end{lemma}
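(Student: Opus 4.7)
The plan is to run a direct minimizing sequence argument, crucially exploiting the compact embedding $H^{1}\hookrightarrow L^{\sigma+2}(|x|^{-b}dx)$ in Lemma \ref{lem 2.2.}. The $|x|^{-b}$ weight breaks translation invariance, so I expect no translations or dilations will be needed; weak convergence plus the compact embedding should upgrade directly to the desired minimizer, avoiding any concentration-compactness machinery.

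First I would pick a minimizing sequence $\{u_{n}\}\subset H^{1}\setminus\{0\}$ with $K_{\omega}(u_{n})=0$ and $S_{\omega}(u_{n})\to s(\omega)$. Since $K_{\omega}(u_{n})=0$, the decomposition \eqref{GrindEQ__2_2_} yields $S_{\omega}(u_{n})=\frac{\sigma}{2(\sigma+2)}H_{\omega}(u_{n})$, so $H_{\omega}(u_{n})$ is bounded, and by \eqref{GrindEQ__2_1_} the sequence $\{u_{n}\}$ is bounded in $H^{1}$. Extracting a subsequence, I get $u_{n}\rightharpoonup u^{*}$ weakly in $H^{1}$, and by Lemma \ref{lem 2.2.} the convergence $u_{n}\to u^{*}$ is strong in $L^{\sigma+2}(|x|^{-b}dx)$. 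The nontriviality $u^{*}\neq 0$ follows from the uniform lower bound $\||x|^{-b/(\sigma+2)}u_{n}\|_{L^{\sigma+2}}^{\sigma+2}\geq C^{-2/\sigma}>0$ established inside the proof of Lemma \ref{lem 2.3.}, combined with the strong convergence of this nonlinear term.

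Next, weak lower semicontinuity of the equivalent $H^{1}$-norm $H_{\omega}$ together with the strong $L^{\sigma+2}(|x|^{-b}dx)$ convergence gives
\[
K_{\omega}(u^{*})\leq \liminf_{n\to\infty}H_{\omega}(u_{n})-\lim_{n\to\infty}\bigl\||x|^{-b/(\sigma+2)}u_{n}\bigr\|_{L^{\sigma+2}}^{\sigma+2}=\liminf_{n\to\infty}K_{\omega}(u_{n})=0.
\]
If $K_{\omega}(u^{*})<0$, then since $K_{\omega}(\lambda u^{*})/\lambda^{2}=H_{\omega}(u^{*})-\lambda^{\sigma}\||x|^{-b/(\sigma+2)}u^{*}\|_{L^{\sigma+2}}^{\sigma+2}$ is positive near $\lambda=0^{+}$ (because $H_{\omega}(u^{*})>0$) and negative at $\lambda=1$, there exists $\lambda_{0}\in(0,1)$ with $K_{\omega}(\lambda_{0}u^{*})=0$. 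Then, using $S_{\omega}=\tfrac{\sigma}{2(\sigma+2)}H_{\omega}$ on the Nehari manifold,
\[
s(\omega)\leq S_{\omega}(\lambda_{0}u^{*})=\lambda_{0}^{2}\frac{\sigma}{2(\sigma+2)}H_{\omega}(u^{*})\leq \lambda_{0}^{2}\liminf_{n\to\infty}S_{\omega}(u_{n})=\lambda_{0}^{2}\,s(\omega)<s(\omega),
\]
which contradicts $s(\omega)>0$ from Lemma \ref{lem 2.3.}. Hence $K_{\omega}(u^{*})=0$, and applying weak lower semicontinuity once more gives $S_{\omega}(u^{*})\leq s(\omega)$; the reverse inequality is automatic from the definition since $u^{*}$ is admissible, so $u^{*}\in \mathcal{M}_{\omega}$.

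The main technical obstacle I anticipate is the nontriviality $u^{*}\neq 0$: in a purely translation-invariant setting one would need to rule out the minimizing sequence escaping to spatial infinity. Here, however, the weight $|x|^{-b}$ in the nonlinear term removes that threat through Lemma \ref{lem 2.2.}, so the whole argument reduces to bookkeeping around weak/strong convergence. The scaling/rescaling step $\lambda_{0}\in(0,1)$ and the use of $s(\omega)>0$ are the two places where the argument could fail if the equivalence \eqref{GrindEQ__2_1_} or the sign of $\omega$ were relaxed.
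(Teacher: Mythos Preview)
Your proof is correct and follows essentially the same approach as the paper: a direct minimizing sequence argument, boundedness in $H^{1}$ via \eqref{GrindEQ__2_2_}, weak convergence plus the compact embedding of Lemma~\ref{lem 2.2.} for the nonlinear term, weak lower semicontinuity to obtain $K_{\omega}(u^{*})\le 0$, and the rescaling $\lambda_{0}u^{*}$ to rule out $K_{\omega}(u^{*})<0$. The only cosmetic differences are that the paper writes $\lambda_{0}$ explicitly rather than invoking the intermediate value theorem, and in the contradiction it uses the representation $S_{\omega}=\tfrac{\sigma}{2(\sigma+2)}\||x|^{-b/(\sigma+2)}\cdot\|_{L^{\sigma+2}}^{\sigma+2}$ (together with the strong convergence of the nonlinear term) instead of your $S_{\omega}=\tfrac{\sigma}{2(\sigma+2)}H_{\omega}$ with weak lower semicontinuity---both are equivalent bookkeeping around \eqref{GrindEQ__2_2_}.
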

\begin{proof}
Let $\{u_n\}$ be a minimizing sequence of $s(\omega)$, i.e. $u_{n}\in H^1(\mathbb R^d)\setminus \{0\}$, $K_{\omega}(u_n)=0$ and $S_{\omega}(u_n)\to s(\omega)$ as $n\to \infty$. Using the fact that $K_{\omega}(u_n)=0$ and \eqref{GrindEQ__2_2_}, we have
\begin{equation}\label{GrindEQ__2_4_}
\frac{\sigma}{2(\sigma+2)}H_{\omega}(u_n)=\frac{\sigma}{2(\sigma+2)}\left\||x|^{\frac{-b}{\sigma+2}}u_n\right\|_{L^{\sigma+2}}^{\sigma+2}\to s(\omega),
\end{equation}
as $n\to \infty$. This shows that $H_{\omega}(u_n)$ is bounded.
Hence, in view of \eqref{GrindEQ__2_1_}, we can see that $\{u_n\}$ is bounded in $H^1$.
Therefore, there exists $\phi\in H^1$ such that, up to a subsequence, $u_{n}\rightharpoonup \phi$ weakly in $H^1$.
By Lemma \ref{lem 2.2.}, we see that as $n\to \infty$,
\begin{equation}\label{GrindEQ__2_5_}
\left\||x|^{\frac{-b}{\sigma+2}}u_n\right\|_{L^{\sigma+2}}^{\sigma+2}\to\left\||x|^{\frac{-b}{\sigma+2}}\phi\right\|_{L^{\sigma+2}}^{\sigma+2},
\end{equation}
which implies
\begin{eqnarray}\begin{split}\label{GrindEQ__2_6_}
K_{\omega}(\phi)&=H_{\omega}(\phi)-\left\||x|^{\frac{-b}{\sigma+2}}\phi\right\|_{L^{\sigma+2}}^{\sigma+2}\\
&\le \liminf_{n\to \infty}H_{\omega}(u_{n})-\lim_{n\to\infty}{\left\||x|^{\frac{-b}{\sigma+2}}u_n\right\|_{L^{\sigma+2}}^{\sigma+2}}\\
&\le \liminf_{n\to \infty} K_{\omega}(u_{n})=0
\end{split}\end{eqnarray}
\eqref{GrindEQ__2_4_} and \eqref{GrindEQ__2_5_} show that $\phi\neq 0$. Next, we suppose that $K_{\omega}(\phi)<0$. Putting
$$
\lambda:=\left(H_{\omega}(\phi)\right)^{\frac{1}{\sigma}}\left\||x|^{\frac{-b}{\sigma+2}}\phi\right\|_{L^{\sigma+2}}^{-\frac{\sigma+2}{\sigma}},
$$
we can see that $\lambda\in (0,1)$ and $K_{\omega}(\lambda \phi)=0$. Hence, it follows from \eqref{GrindEQ__1_11_}, \eqref{GrindEQ__2_4_} and \eqref{GrindEQ__2_5_} that
$$
s(\omega)\le S_{\omega}(\lambda\phi)=\lambda^{\sigma+2}\frac{\sigma}{2(\sigma+2)}\left\||x|^{\frac{-b}{\sigma+2}}\phi\right\|_{L^{\sigma+2}}^{\sigma+2}<s(\omega),
$$
which is a contradiction. Thus, we can see that $K_{\omega}(\phi)=0$. Using \eqref{GrindEQ__2_2_}, \eqref{GrindEQ__2_4_} and \eqref{GrindEQ__2_5_}, we also have $S_{\omega}(\phi)=s(\omega)$.
By definition of $\mathcal{M}_{\omega}$ (see \eqref{GrindEQ__1_9_}), we can deduce that $\phi\in \mathcal{M}_{\omega}$.
\end{proof}

\begin{lemma}\label{lem 2.5.}
Let $\omega>0$. Then $\mathcal{G}_{\omega}=\mathcal{M}_{\omega}$.
\end{lemma}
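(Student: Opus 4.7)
The plan is to prove the two inclusions $\mathcal{M}_\omega \subset \mathcal{G}_\omega$ and $\mathcal{G}_\omega \subset \mathcal{M}_\omega$ separately. Lemma \ref{lem 2.4.} already guarantees that $\mathcal{M}_\omega \neq \emptyset$, which will be essential for the second inclusion.

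For the inclusion $\mathcal{M}_\omega \subset \mathcal{G}_\omega$, I take $\phi \in \mathcal{M}_\omega$ and first want to verify that $\phi \in \mathcal{A}_\omega$, i.e. that $S_\omega'(\phi)=0$. Since $\phi$ minimizes $S_\omega$ under the constraint $K_\omega = 0$, the Lagrange multiplier principle gives some $\eta \in \mathbb{R}$ with $S_\omega'(\phi) = \eta\, K_\omega'(\phi)$. The crux is to show $\eta = 0$. I pair this identity with $\phi$: on one hand $\langle S_\omega'(\phi), \phi\rangle = K_\omega(\phi) = 0$ directly from the definition \eqref{GrindEQ__1_10_}. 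On the other hand, using $K_\omega(\lambda u) = \lambda^2 H_\omega(u) - \lambda^{\sigma+2}\||x|^{-b/(\sigma+2)} u\|_{L^{\sigma+2}}^{\sigma+2}$ and differentiating at $\lambda=1$, one computes
\[
\langle K_\omega'(\phi), \phi\rangle = 2 H_\omega(\phi) - (\sigma+2)\left\||x|^{-b/(\sigma+2)}\phi\right\|_{L^{\sigma+2}}^{\sigma+2} = -\sigma H_\omega(\phi),
\]
where in the last step I used $K_\omega(\phi)=0$. Since $\phi \neq 0$ and $\omega>0$, \eqref{GrindEQ__2_1_} gives $H_\omega(\phi) > 0$, forcing $\eta = 0$ and hence $S_\omega'(\phi) = 0$. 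Next, to obtain $S_\omega(\phi) \le S_\omega(v)$ for every $v \in \mathcal{A}_\omega$, I note that any such $v$ satisfies $K_\omega(v) = \langle S_\omega'(v), v\rangle = 0$, so $v$ is itself admissible in the variational problem \eqref{GrindEQ__1_11_}, yielding $S_\omega(v) \ge s(\omega) = S_\omega(\phi)$. Thus $\phi \in \mathcal{G}_\omega$.

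For the reverse inclusion $\mathcal{G}_\omega \subset \mathcal{M}_\omega$, I take $\phi \in \mathcal{G}_\omega$. Since $S_\omega'(\phi) = 0$, pairing with $\phi$ yields $K_\omega(\phi) = 0$, so $\phi$ is admissible in \eqref{GrindEQ__1_11_}, giving $S_\omega(\phi) \ge s(\omega)$. Conversely, using Lemma \ref{lem 2.4.} I pick some $\psi \in \mathcal{M}_\omega$; by the first inclusion already established, $\psi \in \mathcal{A}_\omega$, and so the defining property of $\mathcal{G}_\omega$ gives $S_\omega(\phi) \le S_\omega(\psi) = s(\omega)$. Hence $S_\omega(\phi) = s(\omega)$ and, together with $K_\omega(\phi) = 0$, this places $\phi \in \mathcal{M}_\omega$.

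The main obstacle is the Lagrange multiplier computation in the first inclusion: everything hinges on showing $\langle K_\omega'(\phi), \phi\rangle \neq 0$ on the constraint set $\{K_\omega = 0\}\setminus\{0\}$. The key algebraic identity is that on this set $\langle K_\omega'(\phi), \phi\rangle$ reduces to $-\sigma H_\omega(\phi)$, which is strictly negative thanks to the nondegeneracy of the Hardy norm ensured by $c < c(d)$ via \eqref{GrindEQ__2_1_}. Once this is in place, the rest of the argument is a short bookkeeping comparison between minimizers under the Nehari constraint and critical points of $S_\omega$.
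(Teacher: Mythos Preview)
Your proof is correct and follows essentially the same approach as the paper: both inclusions via the Lagrange multiplier argument for $\mathcal{M}_\omega \subset \mathcal{G}_\omega$ (pairing $S_\omega'(\phi)=\eta K_\omega'(\phi)$ with $\phi$ and showing $\langle K_\omega'(\phi),\phi\rangle\neq 0$ on the Nehari manifold), followed by the comparison with a minimizer from Lemma~\ref{lem 2.4.} for the reverse inclusion. The only cosmetic difference is that the paper writes $\langle K_\omega'(\phi),\phi\rangle = -\sigma\||x|^{-b/(\sigma+2)}\phi\|_{L^{\sigma+2}}^{\sigma+2}$ whereas you write it as $-\sigma H_\omega(\phi)$; these are of course equal on $\{K_\omega=0\}$.
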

\begin{proof}
By Lemma \ref{lem 2.3.}, $\mathcal{M}_{\omega}$ is non-empty. Let $u\in \mathcal{M}_{\omega}$.
Then there exists a Lagrange multiplier $\lambda\in \mathbb R$ such that $S_{\omega}'(u)=\lambda K_{\omega}'(u)$. Hence, we have
\begin{equation}\nonumber
0=K_{\omega}(u)=\langle S_{\omega}'(u),u\rangle=\lambda\langle K_{\omega}'(u),u\rangle.
\end{equation}
One can easily see that
$$
K'_{\omega}(u)=-2\Delta u+2\omega u-2c|x|^{-2}u-(\sigma+2)|x|^{-b}|u|^{\sigma}u,
$$
which implies that
\begin{equation}\nonumber
\lambda\langle K_{\omega}'(u),u\rangle=2H_{\omega}(u)-(\sigma+2)\left\||x|^{\frac{-b}{\sigma+2}}u\right\|_{L^{\sigma+2}}^{\sigma+2}=-\sigma\left\||x|^{\frac{-b}{\sigma+2}}u\right\|_{L^{\sigma+2}}^{\sigma+2}<0.
\end{equation}
This shows that $\lambda=0$, i.e. $S_{\omega}'(u)=0$. Hence $u\in \mathcal{A}_{\omega}$. To prove $u\in \mathcal{G}_{\omega}$, it remains to prove that $S_{\omega}(u)\le S_{\omega}(v)$ for all $v\in \mathcal{A}_{\omega}$. If $v\in \mathcal{A}_{\omega}$, then we have $K_{\omega}(v)=\langle S_{\omega}'(v),v\rangle=0$. By definition of $\mathcal{M}_{\omega}$ we see that $S_{\omega}(u)\le S_{\omega}(v)$. Therefore, $\mathcal{M}_{\omega}\subset \mathcal{G}_{\omega}$.

Conversely, let $u\in \mathcal{G}_{\omega}$. Since $u\in \mathcal{A}_{\omega}$, we can see that
\begin{equation}\label{GrindEQ__2_7_}
K_{\omega}(u)=\langle S_{\omega}'(u),u\rangle=0.
\end{equation}
On the other hand, since $\mathcal{M}_{\omega}$ is not empty, we take $v \in \mathcal{M}_{\omega}$. Noticing $\mathcal{M}_{\omega}\subset \mathcal{G}_{\omega}$, it follows from the definitions of $\mathcal{G}_{\omega}$ and $\mathcal{M}_{\omega}$ that
\begin{equation}\label{GrindEQ__2_8_}
S_{\omega}(u)=S_{\omega}(v)=s(\omega).
\end{equation}
In view of \eqref{GrindEQ__2_6_} and \eqref{GrindEQ__2_7_}, we have $u\in \mathcal{M}_{\omega}$.
Hence, $\mathcal{G}_{\omega}\subset \mathcal{M}_{\omega}$.
This completes the proof.
\end{proof}

\begin{proof}[Proof of Proposition \ref{prp 1.2.}]
Proposition \ref{prp 1.2.} follows directly from Lemmas \ref{lem 2.4.} and \ref{lem 2.5.}.
\end{proof}

\section{Stability and instability of the ground states}
In this section, we study the stability/instability of the ground states for the INLS$_{c}$ equation \eqref{GrindEQ__1_1_}.

First, we recall the localized virial estimates.
To this end, we introduce a function $\theta:[0,\infty)\to [0,\infty)$ satisfying
\begin{equation}\nonumber
\theta (r)=\left\{\begin{array}{l} {r^2,~\textrm{if}\;0\le r\le 1,}
\\ {\textrm{0},~\textrm{if}\;r\ge 2,} \end{array}\right.
\textrm{and}~~\theta''(r)\le 2 ~~\textrm{for}~~r\ge 0.
\end{equation}
For $R>1$, we define the radial function $\varphi_{R}:\mathbb R^{d}\to [0,\;\infty)$:
\begin{equation}\label{GrindEQ__3_1_}
\varphi_{R}(x)=\varphi_{R}(r):=R^{2}\theta(r/R),\;r=|x|.
\end{equation}

Given a real valued function $a$, we also define the virial potential by
\begin{equation}\nonumber
V_{a}(t):=\int_{\mathbb R^d}{a(x)\left|u(t,x)\right|^{2}dx.}
\end{equation}

\begin{lemma}[Localized virial estimates, \cite{AJK23}]\label{lem 3.1.}
Let $d \ge 3$, $0<b<2$, $c>-c(d)$, $R>1$ and $\varphi_{R}$ be as in \eqref{GrindEQ__3_1_}. Let $u:I\times\mathbb R^{d}\to \mathbb C$ be a solution to the focusing INLS$_{c}$ equation \eqref{GrindEQ__1_1_}. Then for any $t\in I$,
\begin{equation} \nonumber
\frac{d^2}{dt^2}V_{\varphi_{R}}(t)\le 8 G(u(t))+CR^{-2}+CR^{-b}\left\|u(t)\right\|_{H_{c}^{1}}^{\sigma+2},
\end{equation}
where $\left\|u\right\|_{H_{c}^{1}}=\left\|u\right\|_{\dot{H}_{c}^{1}}+\left\|u\right\|_{L^2}$ and
\begin{equation}\label{GrindEQ__3_2_}
G(u):=\left\|u\right\|_{\dot{H}_{c}^{1}}^{2}-\frac{d\sigma+2b}{2(\sigma+2)}\left\||x|^{\frac{-b}{\sigma+2}}u\right\|_{L^{\sigma+2}}^{\sigma+2}.
 \end{equation}
\end{lemma}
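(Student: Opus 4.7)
The plan is to apply the standard virial identity with the smooth radial weight $a=\varphi_R$, observe that $\varphi_R(x)=|x|^2$ on the ball $\{|x|\le R\}$ so that the integrands there agree pointwise with those obtained from the unlocalized choice $a(x)=|x|^2$ (which reproduces $8G(u(t))$ exactly), and then estimate each excess contribution on $\{|x|>R\}$.

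I would begin by deriving, for a smooth real radial weight $a$, the virial identity
\begin{align*}
\frac{d^2}{dt^2}V_a(t) &= 4\int\mathrm{Re}(a_{jk}\partial_j u\,\overline{\partial_k u})\,dx - \int \Delta^2 a\,|u|^2\,dx - 4c\int |x|^{-4}(x\cdot\nabla a)|u|^2\,dx \\
&\quad - \frac{2\sigma}{\sigma+2}\int \Delta a\cdot|x|^{-b}|u|^{\sigma+2}\,dx - \frac{4b}{\sigma+2}\int |x|^{-b-2}(x\cdot\nabla a)|u|^{\sigma+2}\,dx,
\end{align*}
by twice differentiating $V_a$, substituting $u_t$ from the equation, and integrating by parts. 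Setting $a(x)=|x|^2$ recovers $\frac{d^2}{dt^2}V_{|x|^2}=8G(u(t))$ term by term. Writing $\psi(s):=\theta'(s)/s$, the hypothesis $\theta''\le 2$ together with $\theta(s)=s^2$ on $[0,1]$ (hence $\theta'(0)=0$) forces $\theta'(s)\le 2s$ and therefore $\psi\le 2$; the Hessian of $\varphi_R$ at $x$ has radial eigenvalue $\theta''(|x|/R)\le 2$ and tangential eigenvalues $\psi(|x|/R)\le 2$, giving the pointwise bound $\nabla^2\varphi_R\le 2I$.

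The excess $\frac{d^2}{dt^2}V_{\varphi_R}(t)-8G(u(t))$ then splits into four pieces, which I would treat separately. The kinetic excess is non-positive by the Hessian bound, since $4\mathrm{Re}(\partial_j\partial_k\varphi_R\,\partial_ju\overline{\partial_ku})\le 8|\nabla u|^2$ pointwise. The bilaplacian term is supported on $\{R\le|x|\le 2R\}$ with $|\Delta^2\varphi_R|\lesssim R^{-2}$, contributing at most $CR^{-2}\|u\|_{L^2}^2\lesssim R^{-2}$ by mass conservation. The Hardy-potential excess equals $4c\int(2-\psi(|x|/R))|x|^{-2}|u|^2\,dx$, supported on $\{|x|>R\}$ where $|x|^{-2}\le R^{-2}$, so its absolute value is again $\lesssim R^{-2}$. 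The nonlinear excess is $\int_{|x|>R}(\mathrm{bounded})\cdot|x|^{-b}|u|^{\sigma+2}\,dx$; estimating $|x|^{-b}\le R^{-b}$ on this region and invoking the Sobolev embedding $H^1\hookrightarrow L^{\sigma+2}$ (valid since $\sigma+2\le\tfrac{2d}{d-2}$) together with the norm equivalence $\|u\|_{H^1}\sim\|u\|_{H_c^1}$ (from \eqref{GrindEQ__1_4_} when $c<c(d)$) yields the bound $CR^{-b}\|u\|_{H_c^1}^{\sigma+2}$. Summing the four contributions gives the claim.

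The main obstacle I anticipate is the Hessian pointwise bound $\nabla^2\varphi_R\le 2I$: the hypothesis $\theta''\le 2$ directly controls only the radial eigenvalue, and one must exploit the compatibility $\theta(s)=s^2$ on $[0,1]$ (hence $\theta'(0)=0$) to derive the auxiliary inequality $\theta'(s)\le 2s$ governing the tangential eigenvalues. A secondary technical point is making the formal virial computation rigorous at the $H^1$ regularity of the solution, handled by a standard approximation argument using the $H^1$ local well-posedness of \cite{S16}.
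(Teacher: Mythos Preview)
The paper does not prove this lemma; it is quoted from \cite{AJK23} without proof. Your argument is the standard derivation of localized virial estimates and is correct: the virial identity for general radial $a$ is right, the specialization $a=|x|^2$ does reproduce $8G(u)$ term by term, the Hessian bound $\nabla^2\varphi_R\le 2I$ follows exactly as you say (the tangential-eigenvalue step $\theta'(s)\le 2s$ via $\theta'(0)=0$ and $\theta''\le 2$ is the point that needs care, and you handle it), and the four excess terms are disposed of correctly---the kinetic one is nonpositive, the bilaplacian and Hardy excesses are $O(R^{-2})$ by support and mass conservation, and the nonlinear excess is $O(R^{-b}\|u\|_{H_c^1}^{\sigma+2})$ by the exterior bound $|x|^{-b}\le R^{-b}$, Sobolev embedding (valid since $\sigma<\tfrac{4-2b}{d-2}<\tfrac{4}{d-2}$), and the norm equivalence from Hardy. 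One cosmetic remark: the lemma as stated in the paper has the hypothesis $c>-c(d)$, which appears to be a sign-convention carryover from \cite{AJK23}; your use of $c<c(d)$ for the norm equivalence is the correct condition in this paper's convention.
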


In the following lemma, we obtain the Pohazaev-type identities which are satisfied by any solution to the elliptic equation \eqref{GrindEQ__1_5_}.
The proof follows multiplying \eqref{GrindEQ__1_5_} by $\phi_{\omega}$ and $x\cdot\phi_{\omega}$ and using integration by parts and we omit the details.

\begin{lemma}\label{lem 3.2.}
If $\phi_{\omega}\in H^1$ is a solution to \eqref{GrindEQ__1_5_}, then we have
\begin{equation}\label{GrindEQ__3_3_}
\omega\left\|\phi_{\omega}\right\|_{L^2}^{2}
=\frac{4-2b-(d-2)\sigma}{2(\sigma+2)}\left\||x|^{\frac{-b}{\sigma+2}}\phi_{\omega}\right\|_{L^{\sigma+2}}^{\sigma+2}
=\frac{4-2b+(d-2)\sigma}{d\sigma+2b}\left\|\phi_{\omega}\right\|_{\dot{H}_{c}^1}^{2}.
\end{equation}
\end{lemma}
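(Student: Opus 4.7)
The plan is to establish two scalar identities satisfied by $\phi_\omega$---the Nehari identity and a Pohozaev-type identity---and then view them as a linear system in the three quantities $\omega\|\phi_\omega\|_{L^2}^2$, $\|\phi_\omega\|_{\dot{H}_c^1}^2$, and $\||x|^{-b/(\sigma+2)}\phi_\omega\|_{L^{\sigma+2}}^{\sigma+2}$, from which the two stated ratios in \eqref{GrindEQ__3_3_} fall out by elementary algebra.

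First I would derive the Nehari identity by pairing \eqref{GrindEQ__1_5_} with $\bar\phi_\omega$ and integrating. This is exactly the statement $K_\omega(\phi_\omega)=0$ and yields
\begin{equation*}
\|\phi_\omega\|_{\dot{H}_c^1}^2 + \omega\|\phi_\omega\|_{L^2}^2 = \||x|^{-b/(\sigma+2)}\phi_\omega\|_{L^{\sigma+2}}^{\sigma+2}.
\end{equation*}
Next I would pair \eqref{GrindEQ__1_5_} with $x\cdot\nabla\bar\phi_\omega$, take the real part, and integrate by parts term by term. The Laplacian contribution reduces to $-\frac{d-2}{2}\|\nabla\phi_\omega\|_{L^2}^2$ after applying $\mathrm{Re}(\partial_j\phi_\omega\, x_k\partial_j\partial_k\bar\phi_\omega)=\frac{1}{2}x_k\partial_k|\partial_j\phi_\omega|^2$; the linear mass term produces $-\frac{d\omega}{2}\|\phi_\omega\|_{L^2}^2$; and the two singular pieces are tamed using the divergence identities
\begin{equation*}
\nabla\cdot(|x|^{-2}x)=(d-2)|x|^{-2},\qquad \nabla\cdot(|x|^{-b}x)=(d-b)|x|^{-b},
\end{equation*}
together with $\mathrm{Re}(|\phi_\omega|^\sigma\phi_\omega\,\partial_k\bar\phi_\omega)=\frac{1}{\sigma+2}\partial_k|\phi_\omega|^{\sigma+2}$. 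Collecting everything produces the Pohozaev identity
\begin{equation*}
(d-2)\|\phi_\omega\|_{\dot{H}_c^1}^2 + d\omega\|\phi_\omega\|_{L^2}^2 = \frac{2(d-b)}{\sigma+2}\||x|^{-b/(\sigma+2)}\phi_\omega\|_{L^{\sigma+2}}^{\sigma+2}.
\end{equation*}

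The main obstacle is making these formal manipulations rigorous, since $\phi_\omega$ is a priori only in $H^1$ while both $x\cdot\nabla\bar\phi_\omega$ and the inverse-square potential are singular at the origin. To handle this I would first upgrade regularity by a standard bootstrap from \eqref{GrindEQ__1_5_} to get $\phi_\omega\in H^2_{\mathrm{loc}}(\mathbb R^d\setminus\{0\})$ with sufficient decay at infinity, then introduce radial cutoffs localizing to an annulus $\{\epsilon<|x|<R\}$, perform the integrations by parts on that annulus, and pass to the limits $\epsilon\to 0$ and $R\to\infty$. The inner boundary contributions from the $|x|^{-2}$ and $|x|^{-b}$ pieces are dominated via the sharp Hardy inequality \eqref{GrindEQ__1_4_} together with Lemma \ref{lem 2.2.}, while the outer boundary contributions vanish by $H^1$-decay. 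Once both identities hold rigorously, solving the resulting $2\times 2$ linear system for $\omega\|\phi_\omega\|_{L^2}^2$ and $\|\phi_\omega\|_{\dot{H}_c^1}^2$ in terms of $\||x|^{-b/(\sigma+2)}\phi_\omega\|_{L^{\sigma+2}}^{\sigma+2}$ is purely algebraic and delivers \eqref{GrindEQ__3_3_}.
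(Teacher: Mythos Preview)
Your proposal is correct and follows essentially the same route as the paper: the paper states that the proof follows by multiplying \eqref{GrindEQ__1_5_} by $\phi_\omega$ and by $x\cdot\nabla\phi_\omega$ (the paper writes $x\cdot\phi_\omega$, evidently a typo) and integrating by parts, which is exactly your Nehari plus Pohozaev derivation. Your discussion of the annular cutoff to justify the integration by parts goes beyond what the paper supplies, since the paper omits these details entirely.
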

We also recall the so called Brezis-Lieb's lemma.
\begin{lemma}[\cite{BL83}]\label{lem 3.3.}
Let $0<p<\infty$. Suppose that $f_{n}\to f$ almost everywhere and $\{f_{n}\}$ is a bounded sequence in $L^p$, then
$$
\lim_{n\to\infty}(\left\|f_{n}\right\|_{L^p}^p-\left\|f_{n}-f\right\|_{L^p}^p)=\left\|f\right\|_{L^p}^p.
$$
\end{lemma}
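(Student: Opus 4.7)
The plan is to reduce the identity to a dominated-convergence argument built on a robust pointwise inequality. First I would verify that $f\in L^{p}$: since $|f_{n}|^{p}\to |f|^{p}$ almost everywhere and $\sup_{n}\|f_{n}\|_{L^{p}}$ is finite, Fatou's lemma yields $\|f\|_{L^{p}}\le \liminf_{n}\|f_{n}\|_{L^{p}}<\infty$. Set $g_{n}:=f_{n}-f$, so that $g_{n}\to 0$ almost everywhere while $\{g_{n}\}$ stays bounded in $L^{p}$, say by $M:=\sup_{n}\|g_{n}\|_{L^{p}}^{p}$.

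The core analytic input is the following elementary pointwise inequality: for every $\epsilon>0$ there exists a constant $C_{\epsilon}>0$ such that for all complex numbers $a,b$,
\begin{equation*}
\bigl||a+b|^{p}-|a|^{p}-|b|^{p}\bigr|\le \epsilon\,|a|^{p}+C_{\epsilon}\,|b|^{p}.
\end{equation*}
For $p\ge 1$ I would derive this from the mean value theorem applied to $t\mapsto|a+tb|^{p}$, separating the regimes $|b|\le\eta|a|$ and $|b|>\eta|a|$ with $\eta$ chosen in terms of $\epsilon$; for $0<p<1$ the subadditivity $|a+b|^{p}\le|a|^{p}+|b|^{p}$ together with the same two-regime argument gives the bound. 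Applying the inequality with $a=g_{n}$ and $b=f$ yields
\begin{equation*}
\bigl||f_{n}|^{p}-|g_{n}|^{p}-|f|^{p}\bigr|\le \epsilon\,|g_{n}|^{p}+C_{\epsilon}\,|f|^{p}.
\end{equation*}

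Next I would introduce the truncated remainder
\begin{equation*}
W_{n,\epsilon}:=\bigl(\bigl||f_{n}|^{p}-|g_{n}|^{p}-|f|^{p}\bigr|-\epsilon\,|g_{n}|^{p}\bigr)_{+}.
\end{equation*}
The bound above gives $0\le W_{n,\epsilon}\le C_{\epsilon}|f|^{p}\in L^{1}$, and since $g_{n}\to 0$ almost everywhere the quantity $|f_{n}|^{p}-|g_{n}|^{p}-|f|^{p}$ tends to $0$ a.e., forcing $W_{n,\epsilon}\to 0$ a.e. Lebesgue's dominated convergence theorem then gives $\int W_{n,\epsilon}\,dx\to 0$, so
\begin{equation*}
\limsup_{n\to\infty}\int\bigl||f_{n}|^{p}-|g_{n}|^{p}-|f|^{p}\bigr|\,dx\le \epsilon M.
\end{equation*}
Since $\epsilon>0$ is arbitrary, the $\limsup$ equals $0$, which rearranges to the claimed identity.

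The main obstacle is the pointwise inequality: the parameters must be arranged so that, after substitution, the $\epsilon|a|^{p}$ term becomes $\epsilon|g_{n}|^{p}$ and is absorbed by the uniform $L^{p}$ bound on $\{g_{n}\}$, while the $C_{\epsilon}|b|^{p}=C_{\epsilon}|f|^{p}$ term supplies an $n$-independent $L^{1}$ dominator. It is essential to group the terms this way, because no uniform $L^{1}$ bound is available for $|f_{n}|^{p}$ as a majorant; once the inequality is correctly set up, the remainder of the argument is a clean Fatou plus dominated-convergence exercise.
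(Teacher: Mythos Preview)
Your argument is correct and is precisely the classical Br\'ezis--Lieb proof from \cite{BL83}: Fatou to place $f$ in $L^{p}$, the pointwise inequality $\bigl||a+b|^{p}-|a|^{p}-|b|^{p}\bigr|\le \epsilon|a|^{p}+C_{\epsilon}|b|^{p}$ applied with $a=g_{n}=f_{n}-f$ and $b=f$, and then dominated convergence on the truncated remainder $W_{n,\epsilon}$. The paper itself does not supply a proof of this lemma; it is merely quoted from the reference, so there is nothing further to compare against.
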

\subsection{Mass-subcritical case.}
In this subsection, we study stability of ground state for \eqref{GrindEQ__1_1_} in the $L^{2}$-subcritical case, i.e. we prove Theorem \ref{thm 1.3.}.

\begin{proof}[\textnormal{\textbf{Proof of Theorem \ref{thm 1.3.}}}]
It was proved in \cite{S16} that the solution $u$ of \eqref{GrindEQ__1_1_} with $\sigma<\frac{4-2b}{d}$ exists globally. Assume by contradiction that there exists $\epsilon_{0}$ and a sequence $\{u_{0,n}\}\subset H^1$ such that
\begin{equation}\label{GrindEQ__3_4_}
\inf_{\phi_{\omega}\in \mathcal{G}_{\omega}}\left\|u_{0,n}-\phi_{\omega}\right\|_{H^1}<\frac{1}{n},
\end{equation}
and there exists $\{t_{n}\}\subset \mathbb R$ such that the corresponding solution sequence $\{u_{n}(t_{n})\}$ of \eqref{GrindEQ__1_1_} satisfies
\begin{equation}\label{GrindEQ__3_5_}
\inf_{\phi_{\omega}\in \mathcal{G}_{\omega}}\left\|u_{n}(t_{n})-\phi_{\omega}\right\|_{H^1}\ge \epsilon_{0}.
\end{equation}
By \eqref{GrindEQ__3_4_}, there exists $\{\phi_{n}\}\subset \mathcal{G}_{\omega}$ such that
\begin{equation}\label{GrindEQ__3_6_}
\left\|u_{0,n}-\phi_{n}\right\|_{H^1}<\frac{2}{n}.
\end{equation}
Since $\{\phi_{n}\}\subset \mathcal{G}_{\omega}=\mathcal{M}_{\omega}$, $\{\phi_{n}\}$ is a minimizing sequence of $d_{\omega}$. By repeating the same argument as in the proof of Lemma \ref{lem 2.4.}, there exists $\phi\in \mathcal{G}_{\omega}$ such that, up to a subsequence,
\begin{equation}\label{GrindEQ__3_7_}
\lim_{n\to\infty}\left\|\phi_{n}-\phi\right\|_{H^1}=0.
\end{equation}
\eqref{GrindEQ__3_6_} and \eqref{GrindEQ__3_7_} imply that
\begin{equation}\label{GrindEQ__3_8_}
\lim_{n\to\infty}\left\|u_{0,n}-\phi\right\|_{H^1}=0.
\end{equation}
By \eqref{GrindEQ__3_8_} and Lemma \ref{lem 2.2.}, we can also see that
\begin{equation}\label{GrindEQ__3_9_}
\left\||x|^{\frac{-b}{\sigma+2}}u_{0,n}\right\|_{L^{\sigma+2}}^{\sigma+2}\to\left\||x|^{\frac{-b}{\sigma+2}}\phi\right\|_{L^{\sigma+2}}^{\sigma+2},~\textrm{as}~n\to \infty.
\end{equation}
By \eqref{GrindEQ__1_6_}, \eqref{GrindEQ__3_8_}, \eqref{GrindEQ__3_9_} and the conservations laws, we have
\begin{equation}\label{GrindEQ__3_10_}
M(u_{n}(t_n))\to M(\phi),~E(u_n(t_n))\to E(\phi),~S_{\omega}(u_n(t_n))\to S_{\omega}(\phi),~\textrm{as}~n\to \infty.
\end{equation}
It also follows from \eqref{GrindEQ__2_2_} and \eqref{GrindEQ__3_10_} that
\begin{equation}\label{GrindEQ__3_11_}
H_{\omega}(u_n)\to H(\phi),~ \left\||x|^{\frac{-b}{\sigma+2}}u_{n}\right\|_{L^{\sigma+2}}^{\sigma+2}\to\left\||x|^{\frac{-b}{\sigma+2}}\phi\right\|_{L^{\sigma+2}}^{\sigma+2},
~\textrm{as}~n\to \infty.
\end{equation}
Putting
\begin{equation}\label{GrindEQ__3_12_}
\lambda_{n}:=\left(H_{\omega}(u_n(t_n))\right)^{\frac{1}{\sigma}}\left\||x|^{\frac{-b}{\sigma+2}}u_n(t_n)\right\|_{L^{\sigma+2}}^{-\frac{\sigma+2}{\sigma}},
~\tilde{u}_{n}:=\lambda_{n}u_{n}(t_n),
\end{equation}
we can see that $K_{\omega}(\tilde{u}_{n})=0$. It also follows from \eqref{GrindEQ__2_2_}, \eqref{GrindEQ__3_11_} and the fact that $K_{\omega}(\phi)=0$, we can see that
\begin{equation}\label{GrindEQ__3_13_}
\lim_{n\to \infty}\lambda_{n}=1.
\end{equation}
Using \eqref{GrindEQ__3_11_}--\eqref{GrindEQ__3_13_}, we immediately get
$$
S_{\omega}(\tilde{u}_{n})\to S_{\omega}(\phi)=s(\omega).
$$
Hence $\{\tilde{u}_{n}\}$ is a minimizing sequence of $d_{\omega}$. By using the same argument as in the proof of Lemma \ref{lem 2.4.}, there exits $\tilde{\phi}\in \mathcal{G}_{\omega}$ such that, up to a sequence,
\begin{equation}\label{GrindEQ__3_14_}
\lim_{n\to\infty}\left\|\tilde{u}_{n}-\tilde{\phi}\right\|_{H^1}=0.
\end{equation}
Using \eqref{GrindEQ__3_12_} and \eqref{GrindEQ__3_13_}, we also have
\begin{equation}\label{GrindEQ__3_15_}
\lim_{n\to\infty}\left\|\tilde{u}_{n}-u_{n}(t_n)\right\|_{H^1}=0.
\end{equation}
By \eqref{GrindEQ__3_14_} and \eqref{GrindEQ__3_15_}, we immediately get
\begin{equation}\label{GrindEQ__3_16_}
\lim_{n\to\infty}\left\|u_{n}(t_n)-\tilde{\phi}\right\|_{H^1}=0,
\end{equation}
which contradicts \eqref{GrindEQ__3_5_}. This completes the proof.
\end{proof}

\subsection{Mass-critical case.}
In this subsection, we prove Proposition \ref{prp 1.5.} and Theorem \ref{thm 1.7.}.

\begin{proof}[\textnormal{\textbf{Proof of Proposition \ref{prp 1.5.}}}]
If $T^{*}<\infty$, then we are done. If $T^{*}=\infty$, then we have to show that there exists $t_{n}\to \infty$ such that $\left\|u(t_{n})\right\|_{\dot{H}^{1}}\to \infty$ as $n\to \infty$. Assume by contradiction that it doesn't hold, i.e. $\sup_{t\in [0,\infty)}{\left\|u(t)\right\|_{\dot{H}^{1}}}\le M_{0}$ for some $M_{0}>0$. Using the conservation of mass and the fact $\left\|u\right\|_{\dot{H}^{1}}\sim \left\|u\right\|_{\dot{H}_{c}^{1}}$, we have
$$
\sup_{t\in [0,\infty)}{\left\|u(t)\right\|_{H_{c}^{1}}}\le M_{1},
$$
for some $M_{1}>0$. Hence, it follows from Lemma \ref{lem 3.1.}, the conservation of energy and the fact $\sigma=\frac{4-2b}{d}$ that
$$
V''_{\varphi_{R}}(t)\le 8G(u(t))+CR^{-2}+CR^{-b}\left\|u(t)\right\|_{H_{c}^{1}}^{\sigma+2}\le 16E(u_{0})+CR^{-2}+CR^{-b}M_{1}^{\sigma+2},
$$
for all $t\in[0,\infty)$. By taking $R>1$ large enough, we have for all $t\in[0,\infty)$,
$$
V''_{\varphi_{R}}(t)\le 8E(u_{0})<0.
$$
Integrating this estimate, there exists $t_{0}>0$ sufficiently large such that $V_{\varphi_{R}}(t_{0})<0$ which is impossible. This completes the proof.
\end{proof}

\begin{proof}[\textnormal{\textbf{Proof of Theorem \ref{thm 1.7.}}}]
Let $\omega>0$ and $\phi_{\omega}\in \mathcal{G}_{\omega}$. For $\lambda,\mu>0$, we put
\begin{equation}\label{GrindEQ__3_17_}
u_{\lambda}^{\mu}(x):=\mu\lambda^{\frac{d}{2}}\phi_{\omega}(\lambda x).
\end{equation}
One can easily see that
\begin{eqnarray}\begin{split}\label{GrindEQ__3_18_}
&\left\|u_{\lambda}^{\mu}\right\|_{L^2}=\mu\left\|\phi_{\omega}\right\|_{L^2},~
\left\|\nabla u_{\lambda}^{\mu}\right\|_{L^2}=\mu\left\|\nabla \phi_{\omega}\right\|_{L^2},~
\left\||x|^{-1}u_{\lambda}^{\mu}\right\|_{L^2}=\mu\left\||x|^{-1}\phi_{\omega}\right\|_{L^2},\\
&~~~~~~~~~~~~~~~~~~~\left\||x|^{\frac{-b}{\sigma+2}}u_{\lambda}^{\mu}\right\|_{L^{\sigma+2}}
=\mu\lambda^{\frac{d\sigma+2b}{2(\sigma+2)}}\left\||x|^{\frac{-b}{\sigma+2}}\phi_{\omega}\right\|_{L^{\sigma+2}}.
\end{split}\end{eqnarray}
Using Lemma \ref{lem 3.3.} and \eqref{GrindEQ__3_18_}, we can see that $u_{\lambda}^{\mu}\to \phi_{\omega}$ strongly in $H^1$ as $\lambda\to 1$ and $\mu\to 1$. Hence, there exist $\lambda_{0}>1$ and $\mu_{0}>1$ such that $\left\|u_{\lambda_{0}}^{\mu_{0}}-\phi_{\omega}\right\|_{H^1}<\epsilon$.
Putting $u_{0}(x):=u_{\lambda_{0}}^{\mu_{0}}(x)$ and $\sigma_{*}:=\frac{4-2b}{d}$, it follows from \eqref{GrindEQ__3_3_} and \eqref{GrindEQ__3_18_} that
\begin{eqnarray}\begin{split}\label{GrindEQ__3_19_}
E(u_0)&=\frac{1}{2}\left\|u_0\right\|_{\dot{H}_{c}^1}^2-
        \frac{1}{\sigma_{*}+2}\left\||x|^{\frac{-b}{\sigma_{*}+2}}u_0\right\|_{L^{\sigma_{*}+2}}^{\sigma_{*}+2}\\
&=\frac{1}{2}\mu_{0}^2\lambda_{0}^2\left\|\phi_{\omega}\right\|_{\dot{H}_{c}^1}^2
       -\frac{1}{\sigma_{*}+2}\mu_{0}^{\sigma_{*}+2}\lambda_{0}^{2}\left\||x|^{\frac{-b}{\sigma_{*}+2}}\phi_{\omega}\right\|_{L^{\sigma_{*}+2}}^{\sigma_{*}+2}\\
&=\frac{1}{2}\left(1-\mu_{0}^{\sigma_{*}}\right)\mu_{0}^2\lambda_{0}^2\left\|\phi_{\omega}\right\|_{\dot{H}_{c}^1}^2<0.
\end{split}\end{eqnarray}
By \eqref{GrindEQ__3_19_} and Proposition \ref{prp 1.5.}, the corresponding solution $u$ of \eqref{GrindEQ__1_1_} with initial data $u_{0}$ blows up in finite or infinite time.
\end{proof}

\subsection{Intercritical case.}
In this subsection, we prove Theorem \ref{thm 1.10.}. To this end, we first recall the following criterion for blow-up of solutions to the intercritical INLS$_{c}$ equation \eqref{GrindEQ__1_1_}.
\begin{lemma}[\cite{AJK23}]\label{lem 3.4.}
Let $d\ge3$, $0<b<2$, $\frac{4-2b}{d}<\sigma<\frac{4-2b}{d-2}$ and $c>-c(d)$. Let $u$ be the solution to \eqref{GrindEQ__1_1_} defined on the maximal forward time interval of existence $[0,T^{*})$.
Assume that
  \begin{equation}\label{GrindEQ__3_20_}
   \sup_{t\in [0,T^{*})}{G(u(t))}\le -\delta
  \end{equation}
  for some $\delta>0$, where $G(u)$ is as in \eqref{GrindEQ__3_2_}.
  Then either $T^{*}<\infty$ or $T^{*}=\infty$ and there exists a time sequence $t_{n}\to \infty$ such that $\left\|u(t_{n})\right\|_{\dot{H}_{c}^{1}}\to \infty$ as $n\to \infty$.
\end{lemma}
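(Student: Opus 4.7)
The plan is to mimic the strategy used in the proof of Theorem \ref{thm 1.7.}, but with the virial-based blow-up criterion of Lemma \ref{lem 3.4.} replacing Proposition \ref{prp 1.5.}. Concretely, for $\lambda>1$ I define the $L^2$-preserving dilation $u_0^\lambda(x):=\lambda^{d/2}\phi_\omega(\lambda x)$, which converges to $\phi_\omega$ in $H^1$ as $\lambda\to 1^+$ by a routine scaling computation together with continuity of dilations on $H^1$; choosing $\lambda_0>1$ sufficiently close to $1$ therefore gives initial data $u_0:=u_0^{\lambda_0}$ arbitrarily close to $\phi_\omega$ in $H^1$. The goal is then to show that the resulting solution $u$ to \eqref{GrindEQ__1_1_} satisfies the uniform virial bound $\sup_{t\in[0,T^{*})}G(u(t))\le-\delta$ required by Lemma \ref{lem 3.4.}, which directly yields the desired blow-up.

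Set $\alpha:=\frac{d\sigma}{2}+b$, so that $\alpha>2$ precisely in the intercritical range $\sigma>\frac{4-2b}{d}$. Using the scaling behaviour of $\|\cdot\|_{\dot{H}_c^1}$ and of $\||x|^{-b/(\sigma+2)}\cdot\|_{L^{\sigma+2}}$, together with the Pohozaev identities of Lemma \ref{lem 3.2.}, one finds that $\lambda\mapsto S_\omega(u_0^\lambda)$ has derivative $G(\phi_\omega)=0$ at $\lambda=1$ and strictly negative second derivative there; hence $S_\omega(u_0^{\lambda_0})<s(\omega)$ and $G(u_0^{\lambda_0})<0$ for $\lambda_0>1$ close to $1$.

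The analytic core of the proof is the inequality
\[
G(v)\le\alpha\bigl(S_\omega(v)-s(\omega)\bigr)\quad\text{for every }v\in H^1\setminus\{0\}\text{ with }G(v)\le 0.
\]
For such $v$ define $v^\mu(x):=\mu^{d/2}v(\mu x)$ and $h(\mu):=S_\omega(v^\mu)=A\mu^2+B-C\mu^\alpha$, where $A=\tfrac12\|v\|_{\dot{H}_c^1}^2$, $B=\tfrac{\omega}{2}\|v\|_{L^2}^2$, $C=\tfrac{1}{\sigma+2}\||x|^{-b/(\sigma+2)}v\|_{L^{\sigma+2}}^{\sigma+2}$. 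Since $G(v^\mu)=\mu h'(\mu)$ and $\alpha>2$, $h$ has a unique critical point $\mu^*\in(0,1]$, at which $G(v^{\mu^*})=0$. Combining $G(\phi_\omega)=0$ from Lemma \ref{lem 3.2.} with Proposition \ref{prp 1.2.} and a standard rescaling (choose $\mu_0>0$ with $K_\omega(w^{\mu_0})=0$, which exists since $K_\omega(w^\mu)\to\omega\|w\|_{L^2}^2>0$ as $\mu\to 0$ and $\to-\infty$ as $\mu\to\infty$), one obtains the variational identity $s(\omega)=\inf\{S_\omega(w):w\ne 0,\ G(w)=0\}$, so $h(\mu^*)\ge s(\omega)$. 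The critical-point relation $2A=\alpha C(\mu^*)^{\alpha-2}$ then yields
\[
h(1)-\tfrac{1}{\alpha}h'(1)-h(\mu^*)=\tfrac{\alpha-2}{\alpha}A\bigl(1-(\mu^*)^2\bigr)\ge 0,
\]
which, after recalling $h(1)=S_\omega(v)$ and $h'(1)=G(v)$, is exactly the displayed inequality.

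Finally, mass and energy conservation give $S_\omega(u(t))=S_\omega(u_0)<s(\omega)$ on $[0,T^{*})$; if one had $G(u(T_0))=0$ at some $T_0\in[0,T^{*})$, the inequality above would force $0\le\alpha(S_\omega(u_0)-s(\omega))<0$. Continuity therefore precludes $G(u(\cdot))$ from reaching $0$, and so $G(u(t))\le\alpha(S_\omega(u_0)-s(\omega))=:-\delta<0$ uniformly on $[0,T^{*})$, which by Lemma \ref{lem 3.4.} supplies the blow-up conclusion required by Theorem \ref{thm 1.7.} (noting $\|\cdot\|_{\dot{H}_c^1}\sim\|\cdot\|_{\dot{H}^1}$; the negative-time case is identical). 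I expect the principal obstacle to be the variational identity $s(\omega)=\inf\{S_\omega(w):G(w)=0\}$, since it requires matching the Nehari formulation of the ground-state action from Proposition \ref{prp 1.2.} with the Pohozaev-type constraint $G=0$; once that is in hand, the key inequality is a one-line scaling calculation and the propagation to a time-uniform bound is a routine continuity bootstrap.
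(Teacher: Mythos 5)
Your proposal does not prove the statement in question. The statement is Lemma \ref{lem 3.4.} itself, i.e.\ the blow-up criterion: \emph{if} $\sup_{t\in[0,T^*)}G(u(t))\le-\delta$ for some $\delta>0$, then the solution blows up in finite or infinite time. What you have written is instead a proof of Theorem \ref{thm 1.10.} (instability of the intercritical ground state), in which Lemma \ref{lem 3.4.} is invoked as a black box at the very last step (``which by Lemma \ref{lem 3.4.} supplies the blow-up conclusion''). Everything in your argument --- the scaling family $u_0^\lambda$, the variational identity $s(\omega)=\inf\{S_\omega(w):G(w)=0\}$, the invariance of the set $\{S_\omega<s(\omega),\,G<0\}$, and the quantitative bound $G(v)\le\alpha(S_\omega(v)-s(\omega))$ --- is machinery for producing initial data whose flow satisfies the hypothesis \eqref{GrindEQ__3_20_}; none of it addresses why that hypothesis forces blow-up. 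With respect to the actual statement, your argument is circular: it assumes the conclusion as one of its tools.

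A proof of Lemma \ref{lem 3.4.} requires a genuinely different ingredient, namely the localized virial estimate of Lemma \ref{lem 3.1.} (this is how the paper handles the analogous Proposition \ref{prp 1.5.}, and how \cite{AJK23} proves the lemma). Schematically: if $T^*=\infty$ and, for contradiction, $\sup_{t\ge0}\left\|u(t)\right\|_{\dot{H}_c^1}\le M_0$, then mass conservation gives $\sup_{t\ge0}\left\|u(t)\right\|_{H_c^1}\le M_1$, and Lemma \ref{lem 3.1.} yields
\begin{equation*}
\frac{d^2}{dt^2}V_{\varphi_R}(t)\le 8G(u(t))+CR^{-2}+CR^{-b}M_1^{\sigma+2}\le-8\delta+CR^{-2}+CR^{-b}M_1^{\sigma+2}\le-4\delta
\end{equation*}
for $R$ large enough, uniformly in $t$; integrating twice forces the nonnegative quantity $V_{\varphi_R}(t)$ to become negative in finite time, a contradiction. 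Hence either $T^*<\infty$ or $\left\|u(t_n)\right\|_{\dot{H}_c^1}\to\infty$ along some sequence $t_n\to\infty$. None of this convexity-in-time/cutoff analysis appears in your proposal, so as a proof of the stated lemma it has a complete gap. (As an aside, your argument for Theorem \ref{thm 1.10.} itself closely parallels the paper's Lemmas \ref{lem 3.5.}--\ref{lem 3.7.}, with a slightly sharper constant in the key inequality, but that is not the statement you were asked to prove.)
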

A similar statement holds for negative time and we omit the details.

\begin{lemma}\label{lem 3.5.}
Let $d\ge 3$, $0<b<2$, $c\neq0$ be such that $c<c(d)$, $\frac{4-2b}{d}<\sigma<\frac{4-2b}{d-2}$ and $\omega>0$. Let $\phi_{\omega}\in \mathcal{G}_{\omega}$. Then we have
$$
S_{\omega}(\phi_{\omega})=\inf\{S_{\omega}(v):~u\in H^1\setminus \{0\},~G(v)=0\},
$$
where $G(\cdot)$ is as in \eqref{GrindEQ__3_2_}.
\end{lemma}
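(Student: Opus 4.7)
Let $d_{G}(\omega):=\inf\{S_{\omega}(v):v\in H^{1}\setminus\{0\},\,G(v)=0\}$. The plan is to prove $S_{\omega}(\phi_{\omega})=d_{G}(\omega)$ by establishing both inequalities separately.

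For the easy direction $d_{G}(\omega)\le S_{\omega}(\phi_{\omega})$, I would first check that $\phi_{\omega}$ itself lies in the constraint set, i.e.\ $G(\phi_{\omega})=0$. Since $\phi_{\omega}\in\mathcal{A}_{\omega}$, testing \eqref{GrindEQ__1_5_} against $\phi_{\omega}$ yields the Nehari identity $K_{\omega}(\phi_{\omega})=0$, that is, $\|\phi_{\omega}\|_{\dot{H}_{c}^{1}}^{2}+\omega\|\phi_{\omega}\|_{L^{2}}^{2}=\||x|^{-b/(\sigma+2)}\phi_{\omega}\|_{L^{\sigma+2}}^{\sigma+2}$. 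Eliminating $\omega\|\phi_{\omega}\|_{L^{2}}^{2}$ with the first equality of Lemma \ref{lem 3.2.} produces
$$\|\phi_{\omega}\|_{\dot{H}_{c}^{1}}^{2}=\frac{d\sigma+2b}{2(\sigma+2)}\||x|^{-b/(\sigma+2)}\phi_{\omega}\|_{L^{\sigma+2}}^{\sigma+2},$$
which is exactly $G(\phi_{\omega})=0$; hence $\phi_{\omega}$ is admissible for $d_{G}(\omega)$ and the inequality follows.

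The reverse inequality $d_{G}(\omega)\ge S_{\omega}(\phi_{\omega})$ is the main content. Given any $v\in H^{1}\setminus\{0\}$ with $G(v)=0$, I would introduce the $L^{2}$-preserving scaling $v_{\lambda}(x):=\lambda^{d/2}v(\lambda x)$, under which $\|v_{\lambda}\|_{L^{2}}=\|v\|_{L^{2}}$, $\|v_{\lambda}\|_{\dot{H}_{c}^{1}}^{2}=\lambda^{2}\|v\|_{\dot{H}_{c}^{1}}^{2}$, and $\||x|^{-b/(\sigma+2)}v_{\lambda}\|_{L^{\sigma+2}}^{\sigma+2}=\lambda^{\alpha}\||x|^{-b/(\sigma+2)}v\|_{L^{\sigma+2}}^{\sigma+2}$ with $\alpha:=d\sigma/2+b$. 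Crucially $\alpha>2$ by the intercritical assumption $\sigma>(4-2b)/d$. A direct calculation gives the identity $\lambda\,\partial_{\lambda}S_{\omega}(v_{\lambda})=G(v_{\lambda})$; substituting $G(v)=0$ into the explicit form of $G(v_{\lambda})$ and using $\alpha>2$ shows that $\lambda\mapsto S_{\omega}(v_{\lambda})$ is strictly increasing on $(0,1)$ and strictly decreasing on $(1,\infty)$, so that $\lambda=1$ is its unique global maximum and $S_{\omega}(v_{\lambda})\le S_{\omega}(v)$ for every $\lambda>0$.

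It then remains to exhibit some $\lambda_{0}>0$ with $K_{\omega}(v_{\lambda_{0}})=0$; since
$$K_{\omega}(v_{\lambda})=\lambda^{2}\|v\|_{\dot{H}_{c}^{1}}^{2}+\omega\|v\|_{L^{2}}^{2}-\lambda^{\alpha}\||x|^{-b/(\sigma+2)}v\|_{L^{\sigma+2}}^{\sigma+2},$$
this follows from the intermediate value theorem: $K_{\omega}(v_{\lambda})\to\omega\|v\|_{L^{2}}^{2}>0$ as $\lambda\to 0^{+}$, while $K_{\omega}(v_{\lambda})\to-\infty$ as $\lambda\to\infty$ by $\alpha>2$. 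Proposition \ref{prp 1.2.} then yields $S_{\omega}(v_{\lambda_{0}})\ge s(\omega)=S_{\omega}(\phi_{\omega})$, and combining with the maximum property gives $S_{\omega}(v)\ge S_{\omega}(v_{\lambda_{0}})\ge S_{\omega}(\phi_{\omega})$; taking the infimum over $v$ concludes the proof. The key delicate point is choosing the right rescaling: the $L^{2}$-preserving family is precisely what makes $\lambda\partial_{\lambda}S_{\omega}(v_{\lambda})$ coincide with $G(v_{\lambda})$, so that the single intercritical condition $\alpha>2$ drives both the monotonicity of $S_{\omega}(v_{\lambda})$ and the sign change of $K_{\omega}(v_{\lambda})$; any other scaling would leave a residual $\omega\|v\|_{L^{2}}^{2}$ term in the derivative and destroy this clean coupling.
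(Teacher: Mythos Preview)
Your proposal is correct and follows essentially the same approach as the paper: both use the $L^{2}$-preserving scaling $v_{\lambda}(x)=\lambda^{d/2}v(\lambda x)$, the identity $\lambda\,\partial_{\lambda}S_{\omega}(v_{\lambda})=G(v_{\lambda})$ to identify $\lambda=1$ as the global maximum of $\lambda\mapsto S_{\omega}(v_{\lambda})$, the intermediate value theorem on $K_{\omega}(v_{\lambda})$ to land on the Nehari manifold, and Proposition~\ref{prp 1.2.} to compare with $S_{\omega}(\phi_{\omega})$. The only cosmetic difference is that the paper treats the case $K_{\omega}(v)=0$ separately, whereas your uniform argument already covers it (with $\lambda_{0}=1$).
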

\begin{proof}
Let $s^{*}(\omega):=\inf\{S_{\omega}(v):~v\in H^1\setminus \{0\},~G(v)=0\}$. By Lemma \ref{lem 3.2.}, we can see that $G(\phi_{\omega})=0$. By definition of $s^{*}(\omega)$, we have
\begin{equation}\label{GrindEQ__3_21_}
S_{\omega}(\phi_{\omega})\ge s^{*}(\omega).
\end{equation}
Now, we consider $v\in H^{1}\setminus \{0\}$ be such that $G(v)=0$. If $K_{\omega}(v)=0$, then by Proposition \ref{prp 1.2.}, $S_{\omega}(v)\ge S_{\omega}(\phi_{\omega})$. Let us assume that $K_{\omega}(v)\neq 0$.
Putting $v_{\lambda}(x):=\lambda^{\frac{d}{2}}v(\lambda x)$, it is easy to see that
\begin{eqnarray}\begin{split}\label{GrindEQ__3_22_}
&\left\|v_{\lambda}\right\|_{L^2}=\mu\left\|v\right\|_{L^2},~
\left\|\nabla v_{\lambda}\right\|_{L^2}=\left\|\nabla v\right\|_{L^2},~
\left\||x|^{-1}v_{\lambda}\right\|_{L^2}=\mu\left\||x|^{-1}v\right\|_{L^2},\\
&~~~~~~~~~~~~~~~~~~~\left\||x|^{\frac{-b}{\sigma+2}}v_{\lambda}\right\|_{L^{\sigma+2}}
=\lambda^{\frac{d\sigma+2b}{2(\sigma+2)}}\left\||x|^{\frac{-b}{\sigma+2}}v\right\|_{L^{\sigma+2}}.
\end{split}\end{eqnarray}
Hence, we have
\begin{equation}\label{GrindEQ__3_23_}
S_{\omega}(v_{\lambda})=\frac{\lambda^2}{2}\left\|v\right\|_{\dot{H}_{c}^1}^2+\frac{\omega}{2}\left\|v\right\|_{L^2}^{2}
                       -\frac{\lambda^{\frac{d\sigma+2b}{2}}}{\sigma+2}\left\||x|^{\frac{-b}{\sigma+2}}v\right\|_{L^{\sigma+2}}^{\sigma+2},
\end{equation}
\begin{equation}\label{GrindEQ__3_24_}
G(v)=\partial_{\lambda}S_{\omega}(v_{\lambda})|_{\lambda=1},
\end{equation}
and
\begin{equation}\label{GrindEQ__3_25_}
K_{\omega}(v_{\lambda})=\lambda^2\left\|v\right\|_{\dot{H}_{c}^1}^2+\omega\left\|v\right\|_{L^2}^{2}
                       -\lambda^{\frac{d\sigma+2b}{2}}\left\||x|^{\frac{-b}{\sigma+2}}v\right\|_{L^{\sigma+2}}^{\sigma+2}.
\end{equation}
We can see that
$$\lim_{\lambda\to 0}K_{\omega}(v_{\lambda})=\omega\left\|v\right\|_{L^2}^{2}>0.
$$
Since $\frac{d\sigma+2b}{2}>2$, we also have
$$
\lim_{\lambda\to \infty}K_{\omega}(v_{\lambda})=-\infty.
$$
Hence, there exists $\lambda_{0}>0$ such that $K_{\omega}(v_{\lambda_{0}})=0$.
By Proposition \ref{prp 1.2.}, we see that
\begin{equation}\label{GrindEQ__3_26_}
S_{\omega}(v_{\lambda_{0}})\ge S_{\omega}(\phi_{\omega}).
\end{equation}
Meanwhile, noticing that
\begin{eqnarray}\begin{split}\nonumber
\partial_{\lambda}S_{\omega}(v_{\lambda})
=\lambda\left(\left\|v\right\|_{\dot{H}_{c}^1}^2                    -\frac{d\sigma+2b}{2(\sigma+2)}\lambda^{\frac{d\sigma+2b}{2}-2}\left\||x|^{\frac{-b}{\sigma+2}}v\right\|_{L^{\sigma+2}}^{\sigma+2}\right),
\end{split}\end{eqnarray}
the equation $\partial_{\lambda}S_{\omega}(v_{\lambda})=0$ admits a unique non-zero solution
$$
\lambda_1=\left(\frac{\left\|v\right\|_{\dot{H}_{c}^1}^2}
{\frac{d\sigma+2b}{2(\sigma+2)}\left\||x|^{\frac{-b}{\sigma+2}}v\right\|_{L^{\sigma+2}}^{\sigma+2}}\right)^{\frac{2}{d\sigma+2b-4}},
$$
which is equal to $1$ since $G(v)=0$. We can also see that $\partial_{\lambda}S_{\omega}(v_{\lambda})>0$ if $\lambda\in (0,1)$ and $\partial_{\lambda}S_{\omega}(v_{\lambda})<0$ if $\lambda>1$. In particular, we have $S_{\omega}(v_{\lambda})<S_{\omega}(v_1)=S_{\omega}(v)$ for any $\lambda>0$ and $\lambda\neq 1$. Since $\lambda_{0}>0$, we can see that
\begin{equation}\label{GrindEQ__3_27_}
S_{\omega}(v_{\lambda_{0}})\le S_{\omega}(v).
\end{equation}
In view of \eqref{GrindEQ__3_26_} and \eqref{GrindEQ__3_27_}, we can see that $S_{\omega}(v)\ge S_{\omega}(\phi_{\omega})$ for any $v\in H^{1}\setminus \{0\}$ with $G(v)=0$. Taking the infimum, we have
\begin{equation}\label{GrindEQ__3_28_}
S_{\omega}(\phi_{\omega})\le s^{*}(\omega).
\end{equation}
Using \eqref{GrindEQ__3_21_} and \eqref{GrindEQ__3_28_}, we get the desired result.
\end{proof}

\begin{lemma}\label{lem 3.6.}
Let $d\ge 3$, $0<b<2$, $\frac{4-2b}{d}<\sigma<\frac{4-2b}{d-2}$, $\omega>0$ and $c\neq 0$ be such that $c<c(d)$. Let $\phi_{\omega}\in \mathcal{G}_{\omega}$. Then
$$
\mathcal{B}_{\omega}:=\{v\in H^1\setminus \{0\}:~S_{\omega}(v)<S_{\omega}(\phi_{\omega}),~G(v)<0\}\}
$$
is invariant under the flow of \eqref{GrindEQ__1_1_}, that is, if $u_{0}\in \mathcal{B}_{\omega}$, then the corresponding solution $u(t)$ to \eqref{GrindEQ__1_1_} with $u(0)=u_{0}$, defined on the maximal interval $(-T_{\min},T_{\max})$, satisfies $u(t)\in \mathcal{B}_{\omega}$ for any $t\in (-T_{\min},T_{\max})$.
\end{lemma}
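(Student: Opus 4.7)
The plan is to use the conservation laws together with the variational characterization from Lemma \ref{lem 3.5.} and a continuity argument. Fix $u_{0}\in \mathcal{B}_{\omega}$ and let $u(t)$ be the corresponding solution on $(-T_{\min},T_{\max})$. By conservation of mass and energy, the action is conserved along the flow: $S_{\omega}(u(t))=S_{\omega}(u_{0})<S_{\omega}(\phi_{\omega})$ for every $t\in (-T_{\min},T_{\max})$. Moreover, since $u_{0}\in H^{1}\setminus\{0\}$ implies $\|u_{0}\|_{L^{2}}>0$, mass conservation yields $u(t)\neq 0$ for all $t$ in the maximal interval. This already secures one of the two defining conditions of $\mathcal{B}_{\omega}$, so the only remaining task is to show $G(u(t))<0$ throughout the lifespan.

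For this I would argue by contradiction. Suppose there exists $t_{1}\in (-T_{\min},T_{\max})$ with $G(u(t_{1}))\ge 0$. Since $G(u_{0})<0$ and the map $t\mapsto G(u(t))$ is continuous (the solution lies in $C((-T_{\min},T_{\max}); H^{1})$, the Hardy functional $\|\cdot\|_{\dot H_{c}^{1}}^{2}$ is continuous on $H^{1}$ in view of \eqref{GrindEQ__1_4_}, and Lemma \ref{lem 2.2.} ensures the continuity of $u\mapsto \||x|^{-b/(\sigma+2)}u\|_{L^{\sigma+2}}^{\sigma+2}$ on $H^{1}$), the intermediate value theorem gives some $t^{*}$ between $0$ and $t_{1}$ with $G(u(t^{*}))=0$.

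Since $u(t^{*})\in H^{1}\setminus\{0\}$ and $G(u(t^{*}))=0$, Lemma \ref{lem 3.5.} applies and yields
$$
S_{\omega}(u(t^{*}))\ge S_{\omega}(\phi_{\omega}).
$$
Combining this with the conservation of $S_{\omega}$ gives $S_{\omega}(u_{0})=S_{\omega}(u(t^{*}))\ge S_{\omega}(\phi_{\omega})$, contradicting the assumption $u_{0}\in \mathcal{B}_{\omega}$. Hence $G(u(t))<0$ for all $t\in (-T_{\min},T_{\max})$, and therefore $u(t)\in \mathcal{B}_{\omega}$ throughout.

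The only potentially delicate point is the continuity of $G$ along the flow, which is a routine consequence of the $H^{1}$-continuity of the solution (given by the local well-posedness theory of Suzuki \cite{S16}) combined with the continuity of each of the three terms entering $G$ as functionals on $H^{1}$; all other steps are standard applications of conservation laws and the variational lemma already proved.
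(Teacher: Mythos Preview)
Your proof is correct and follows essentially the same approach as the paper: conserve $S_{\omega}$ along the flow, then argue by contradiction using the continuity of $t\mapsto G(u(t))$ to find a time where $G$ vanishes and invoke Lemma~\ref{lem 3.5.}. Your version is in fact slightly more careful, since you explicitly note that $u(t)\neq 0$ by mass conservation and justify the continuity of $G$ on $H^{1}$, points the paper leaves implicit.
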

\begin{proof}
Let $u_{0}\in \mathcal{B}_{\omega}$. Using the conservation of mass and energy, we have
\begin{equation}\label{GrindEQ__3_29_}
S_{\omega}(u(t))=S_{\omega}(u_0)< S_{\omega}(\phi_{\omega}),
\end{equation}
for $t\in (-T_{\min},T_{\max})$. It suffices to prove that $G(u(t))<0$ for any $t\in (-T_{\min},T_{\max})$.
Without loss of generality, we assume that there exists $t_{0}\in [0,T_{\max})$ such that $G(u(t_0))\ge 0$. By the continuity of $t\mapsto G(u(t))$, there exists $t_{1}\in (0,T_{0}]$ such that $G(u(t_{1}))=0$. By Lemma \ref{lem 3.5.}, $S_{\omega}(u(t_1))\ge S_{\omega}(\phi_{\omega})$ which contradicts to \eqref{GrindEQ__3_29_}. This completes the proof.
\end{proof}

\begin{lemma}\label{lem 3.7.}
Let $d\ge 3$, $0<b<2$, $c\neq$ be such that $c<c(d)$, $\frac{4-2b}{d}<\sigma<\frac{4-2b}{d-2}$ and $\omega>0$. Let $\phi_{\omega}\in \mathcal{G}_{\omega}$. If $v\in \mathcal{B}_{\omega}$, then we have
$$
G(v)\le 2(S_{\omega}(v)-S_{\omega}(\phi_{\omega})).
$$
\end{lemma}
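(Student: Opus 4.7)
The plan is to exploit the scaling $v_\lambda(x):=\lambda^{d/2}v(\lambda x)$ already introduced in the proof of Lemma~\ref{lem 3.5.}, compare $S_\omega(v_\lambda)$ at the critical scale $\lambda_1$ where $G$ vanishes with the quantity $S_\omega(v)-\tfrac{1}{2}G(v)$, and then invoke Lemma~\ref{lem 3.5.} to bring $S_\omega(\phi_\omega)$ into the picture.

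First, I would reuse the identities \eqref{GrindEQ__3_22_}--\eqref{GrindEQ__3_25_}, which give
$$G(v_\lambda)=\lambda\,\partial_\lambda S_\omega(v_\lambda)=\lambda^2\|v\|_{\dot{H}_c^1}^2-\frac{d\sigma+2b}{2(\sigma+2)}\lambda^{\frac{d\sigma+2b}{2}}\bigl\||x|^{\frac{-b}{\sigma+2}}v\bigr\|_{L^{\sigma+2}}^{\sigma+2}.$$
Solving $G(v_\lambda)=0$ yields a unique positive root
$$\lambda_1=\left(\frac{\|v\|_{\dot{H}_c^1}^2}{\tfrac{d\sigma+2b}{2(\sigma+2)}\bigl\||x|^{\frac{-b}{\sigma+2}}v\bigr\|_{L^{\sigma+2}}^{\sigma+2}}\right)^{\!\!\frac{2}{d\sigma+2b-4}},$$
and, since the exponent $\tfrac{d\sigma+2b}{2}-2>0$ (intercritical assumption $\sigma>\tfrac{4-2b}{d}$), the hypothesis $G(v)=G(v_1)<0$ forces $\lambda_1\in(0,1)$.

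Second, plugging the identity $\lambda_1^2\|v\|_{\dot{H}_c^1}^2=\tfrac{d\sigma+2b}{2(\sigma+2)}\lambda_1^{(d\sigma+2b)/2}\||x|^{-b/(\sigma+2)}v\|_{L^{\sigma+2}}^{\sigma+2}$ back into \eqref{GrindEQ__3_23_} collapses $S_\omega(v_{\lambda_1})$ to
$$S_\omega(v_{\lambda_1})=\frac{\omega}{2}\|v\|_{L^2}^2+\frac{(d\sigma+2b-4)\,\lambda_1^{\frac{d\sigma+2b}{2}}}{4(\sigma+2)}\bigl\||x|^{\frac{-b}{\sigma+2}}v\bigr\|_{L^{\sigma+2}}^{\sigma+2}.$$
On the other hand, a direct computation from the definitions of $S_\omega$ and $G$ gives
$$S_\omega(v)-\frac{1}{2}G(v)=\frac{\omega}{2}\|v\|_{L^2}^2+\frac{d\sigma+2b-4}{4(\sigma+2)}\bigl\||x|^{\frac{-b}{\sigma+2}}v\bigr\|_{L^{\sigma+2}}^{\sigma+2}.$$
Since $\lambda_1<1$ and $d\sigma+2b-4>0$, these two expressions differ only in the factor $\lambda_1^{(d\sigma+2b)/2}<1$, so $S_\omega(v_{\lambda_1})\le S_\omega(v)-\tfrac{1}{2}G(v)$.

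Third, because $v_{\lambda_1}\in H^1\setminus\{0\}$ and $G(v_{\lambda_1})=0$, Lemma~\ref{lem 3.5.} yields $S_\omega(\phi_\omega)\le S_\omega(v_{\lambda_1})$. Chaining the two inequalities gives $S_\omega(\phi_\omega)\le S_\omega(v)-\tfrac{1}{2}G(v)$, which is exactly the stated bound after rearrangement. The calculation is elementary once the scaling framework of Lemma~\ref{lem 3.5.} is in hand; the only point that genuinely uses the definition of $\mathcal{B}_\omega$ is the strict inequality $G(v)<0$, which is essential to locate the critical scale on the correct side of $1$. The assumption $S_\omega(v)<S_\omega(\phi_\omega)$ is not needed for this particular estimate.
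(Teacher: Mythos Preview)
Your argument is correct. Both your proof and the paper's hinge on the same scaling $v_\lambda(x)=\lambda^{d/2}v(\lambda x)$, locate the same critical scale $\lambda_1\in(0,1)$ where $G(v_{\lambda_1})=0$, and finish by invoking Lemma~\ref{lem 3.5.}. The difference lies only in the bridge step showing $S_\omega(v_{\lambda_1})\le S_\omega(v)-\tfrac12 G(v)$: the paper derives the differential inequality $\partial_\lambda(\lambda\,\partial_\lambda S_\omega(v_\lambda))\le 2\,\partial_\lambda S_\omega(v_\lambda)$ and integrates it over $[\lambda_1,1]$, whereas you compute both sides explicitly and compare the resulting algebraic expressions, which differ only by the factor $\lambda_1^{(d\sigma+2b)/2}<1$. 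Your route is slightly more elementary (no integration, just a pointwise comparison), while the paper's differential-inequality formulation is the more standard template in the literature and generalizes more readily to settings where closed-form expressions for $S_\omega(v_\lambda)$ are less transparent. Your closing remark that only $G(v)<0$ (and not $S_\omega(v)<S_\omega(\phi_\omega)$) is actually used is also accurate.
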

\begin{proof}
As in the proof of Lemma \ref{lem 3.5.}, putting $v_{\lambda}(x):=\lambda^{\frac{d}{2}}v(\lambda x)$, it follows from \eqref{GrindEQ__3_23_} that
\begin{equation}\label{GrindEQ__3_30_}
\partial_{\lambda}S_{\omega}(v_{\lambda})=\lambda\left\|v\right\|_{\dot{H}_{c}^1}^2
                       -\frac{d\sigma+2b}{2(\sigma+2)}\lambda^{\frac{d\sigma+2b}{2}-1}\left\||x|^{\frac{-b}{\sigma+2}}v\right\|_{L^{\sigma+2}}^{\sigma+2}
=\frac{G(v_{\lambda})}{\lambda}
\end{equation}
and
\begin{equation}\nonumber
\partial_{\lambda}(\lambda \partial_{\lambda}S_{\omega}(v_{\lambda}))
=2\partial_{\lambda}S_{\omega}(v_{\lambda})
-\frac{(d\sigma+2b)(d\sigma+2b-4)}{4(\sigma+2)}\lambda^{\frac{d\sigma+2b}{2}-1}\left\||x|^{\frac{-b}{\sigma+2}}v\right\|_{L^{\sigma+2}}^{\sigma+2}.
\end{equation}
Since $\sigma>\frac{4-2b}{d}$, we have
\begin{equation}\label{GrindEQ__3_31_}
\partial_{\lambda}(\lambda \partial_{\lambda}S_{\omega}(v_{\lambda}))\le 2\partial_{\lambda}S_{\omega}(v_{\lambda}),~\forall \lambda>0.
\end{equation}
By \eqref{GrindEQ__3_30_} and the fact that $G(v)<0$, there exists $\lambda_{0}\in (0,1)$ such that
$$
G(v_{\lambda_0})=\lambda_{0}\partial_{\lambda}S_{\omega}(v_{\lambda})|_{\lambda=\lambda_{0}}=0.
$$
Hence, integrating \eqref{GrindEQ__3_30_} on $[\lambda_{0},1]$, we get
$$
G(v)\le 2(S_{\omega}(v)-S_{\omega}(v_{\lambda_0}))\le 2(S_{\omega}(v)-S_{\omega}(\phi_{\omega})).
$$
Here the last inequality follows from Lemma \ref{lem 3.5.}. This completes the proof.
\end{proof}

We are ready to prove Theorem \ref{thm 1.10.}
\begin{proof}[\textnormal{\textbf{Proof of Theorem \ref{thm 1.10.}}}]
Let $\omega>0$, $\phi_{\omega}\in \mathcal{G}_{\omega}$ and $\lambda>0$. As in the proof of Lemmas \ref{lem 3.5.} and \ref{lem 3.7.}, putting $\phi_{\lambda}(x):=\lambda^{\frac{d}{2}}\phi_{\omega}(\lambda x)$, it is easy to see that
\begin{eqnarray}\begin{split}\label{GrindEQ__3_32_}
&\left\|\phi_{\lambda}\right\|_{L^2}=\left\|\phi_{\omega}\right\|_{L^2},~
\left\|\nabla \phi_{\lambda}\right\|_{L^2}=\left\|\nabla \phi_{\omega}\right\|_{L^2},~
\left\||x|^{-1}\phi_{\lambda}\right\|_{L^2}=\left\||x|^{-1}\phi_{\omega}\right\|_{L^2},\\
&~~~~~~~~~~~~~~~~~~~\left\||x|^{\frac{-b}{\sigma+2}}\phi_{\lambda}\right\|_{L^{\sigma+2}}
=\lambda^{\frac{d\sigma+2b}{2(\sigma+2)}}\left\||x|^{\frac{-b}{\sigma+2}}\phi_{\omega}\right\|_{L^{\sigma+2}}.
\end{split}\end{eqnarray}
Using Lemma \ref{lem 3.3.} and \eqref{GrindEQ__3_32_}, we can see that $\phi_{\lambda}\to \phi_{\omega}$ strongly in $H^1$ as $\lambda\to 1$. Hence, there exists $\lambda_{0}>1$ such that $\left\|\phi_{\lambda_{0}}-\phi_{\omega}\right\|_{H^1}<\epsilon$.
First, we claim that $\phi_{\lambda_{0}}\in \mathcal{B}_{\omega}$. Indeed, it follows from Lemma \ref{lem 3.2.} that $G(\phi_{\omega})=0$.
Using the same argument as in the proof of Lemma \ref{lem 3.5.}, we can see that $\partial_{\lambda}S_{\omega}(\phi_{\lambda})>0$ if $\lambda\in (0,1)$ and $\partial_{\lambda}S_{\omega}(\phi_{\lambda})<0$ if $\lambda>1$. Hence, we have $S_{\omega}(\phi_{\lambda})<S_{\omega}(\phi_{\omega})$ for any $\lambda>0$, $\lambda\neq1$. Noticing $\lambda_{0}>1$ and $G(\phi_{\lambda})=\lambda\partial_{\lambda}S_{\omega}(\phi_{\lambda})$, we have
\begin{eqnarray}\begin{split}\label{GrindEQ__3_33_}
S_{\omega}(\phi_{\lambda_{0}})<S_{\omega}(\phi_{\omega}),~G(\phi_{\lambda_{0}})<0,
\end{split}\end{eqnarray}
from which we get $\phi_{\lambda_{0}}\in \mathcal{B}_{\omega}$.
From the local theory of \cite{S16}, there exists a unique maximal solution $u(t)\in C((-T_{\min},T_{\max}),H^1)$ to \eqref{GrindEQ__1_1_} with initial data $u(0)=u_{0}:=\phi_{\lambda_{0}}$.
Let $u_{0}\in \mathcal{B}_{\omega}$. Using \eqref{GrindEQ__3_33_}, the conservation of mass and energy, we have
\begin{equation}\label{GrindEQ__3_34_}
S_{\omega}(u(t))=S_{\omega}(\phi_{\lambda_{0}}),
\end{equation}
for $t\in (-T_{\min},T_{\max})$.
By Lemma \ref{lem 3.6.}, we can see that $u(t)\in \mathcal{B}_{\omega}$ for any $t\in (-T_{\min},T_{\max})$.
Hence, using Lemma \ref{lem 3.7.} and \eqref{GrindEQ__3_34_}, we have
$$
G(u(t))\le 2(S_{\omega}(u(t))-S_{\omega}(\phi_{\omega}))=(S_{\omega}(\phi_{\lambda_{0}})-S_{\omega}(\phi_{\omega}))<0,
$$
for any $t\in (-T_{\min},T_{\max})$. By Lemma \ref{lem 3.4.}, $u$ blows up in finite or infinite time. This completes the proof.
\end{proof}


\smallskip

\emph{E-mail address}: jm.an0221@ryongnamsan.edu.kp; cioc12@ryongnamsan.edu.kp (J. An)

\emph{E-mail address}: math9@ryongnamsan.edu.kp (H. Mun)

\emph{E-mail address}: jm.kim0211@ryongnamsan.edu.kp (J. Kim)

\end{document}